\newtheorem{theorem}{Theorem}[section]
\theoremstyle{plain}
\newtheorem{corollary}[theorem]{Corollary}
\newtheorem{defi}[theorem]{Definition}
\newtheorem{example}[theorem]{Example}
\newtheorem{lemma}[theorem]{Lemma}
\newtheorem{prop}[theorem]{Proposition}
\newtheorem{remark}[theorem]{Remark}
\numberwithin{equation}{section}
\newcommand{\1}{{\rm1\!\!1}}
\def\Xxi{{\mathfrak X}_\zeta}
\def\One{{1\!\!1}}
\def\Ok{{\mathcal O}}
\def\wtil{\widetilde}
\newcommand{\lam}{\lambda}
\newcommand{\om}{\omega}
\newcommand{\Sig}{\Sigma}
\newcommand{\sig}{\sigma}
\newcommand{\R}{{\mathbb R}}
\newcommand{\Z}{{\mathbb Z}}
\newcommand{\C}{{\mathbb C}}
\def\N{{\mathbb N}}
\newcommand{\Nat}{{\mathbb N}}
\def\A{{\mathcal A}}
\def\Rk{{\mathcal R}}
\def\Qk{{\mathcal Q}}
\def\Sf{{\sf S}}
\def\T{{\mathbb T}}
\def\be{\begin{equation}}
\def\ee{\end{equation}}
\newcommand{\eps}{{\varepsilon}}
\def\ov{\overline}
\def\a{a}
\def\ve1{\vec{1}}
\def\Ak{{\mathcal A}}
\def\Cf{{\sf C}}
\def\wt{\widetilde}
\def\what{\widehat}
\begin{document}

\title[Singular substitutions of constant length]{Singular substitutions of constant length}

\author{Artemi Berlinkov }
\address{Artemi Berlinkov, Department of Mathematics,
Bar-Ilan University, Ramat-Gan, Israel}
\email{artemi.berlinkov@biu.ac.il}

\author{Boris Solomyak }
\address{Boris Solomyak, Department of Mathematics,
Bar-Ilan University, Ramat-Gan, Israel}
\email{bsolom3@gmail.com}

\begin{abstract} We consider primitive aperiodic substitutions of constant length $q$ and prove that, in order to have a Lebesgue component in the spectrum of the associated dynamical system,  it is necessary that one of the eigenvalues of the substitution matrix equals $\sqrt{q}$ in absolute value. The proof is based on results of M. Queff\'elec, combined with estimates of the local dimension of the spectral measure at zero. 
\end{abstract}


\subjclass{37A30; 37B10.}
\keywords{Substitution dynamical system; singular spectrum; local dimension of a measure.}
\thanks{Both authors were supported in part by the Israel Science Foundation (grant 396/15).}
\thanks{A. Berlinkov was supported in part by the Center for Absorption in Science,  Ministry of Immigrant 
Absorption, State of Israel.}

\maketitle

\thispagestyle{empty}

\section{Introduction}\label{intro}
In this paper we focus on the question: when is the spectrum of a substitution dynamical system singular? Our main result is a new sufficient condition for singularity of constant length substitutions. 

In order to state the results, we first recall the background; see e.g.\ \cite{Queff,Siegel} for more details.
For $m\ge 2$ consider a finite alphabet $\A$ of size $m$ and the set $\A^+$  of nonempty words with letters in $\A$. 
A {\em substitution}  is a map $\zeta:\,\A\to \A^+$, extended to 
 $\A^+$ and $\A^{\N}$ by
concatenation. The {\em substitution space} $X_\zeta$ is
the set of bi-infinite sequences $x\in \A^\Z$ such that any word  in $x$
appears as a subword of $\zeta^n(\a)$ for some $\a\in \A$ and $n\in \N$. The {\em substitution dynamical system}  is the left
shift on $\A^\Z$ restricted to $X_\zeta$, denoted by $T_\zeta$.
The {\em substitution matrix} is defined by
$$
\Sf_\zeta(a,b):= \mbox{number of  symbols $a$ in $\zeta(b)$}.
$$
 The substitution is said to be {\em primitive} if $\Sf_\zeta^n$ has all entries strictly positive for some $n\in \Nat$.
Primitive substitution $\Z$-actions are minimal and uniquely ergodic; we denote the unique invariant Borel probability measure by $\mu$.
We always assume that the substitution is  {\em non-periodic}, which in the primitive case is equivalent to the space $X_\zeta$ being infinite. Furthermore, passing to a power of $\zeta$ if necessary, we can assume that there exists a letter $a$ such that $\zeta(a)$ starts with $a$. We then obtain a one-sided fixed point of the substitution:
\be \label{fixpt}
U=u_0u_1u_2\ldots = \lim_{n\to\infty} \zeta^n(a).
\ee

The length of a word $u$ is denoted by $|u|$. The substitution $\zeta$ is  of {\em constant length} $q$ if $|\zeta(a)|=q$ for all $a\in \A$, otherwise, it is of {\em non-constant length}.
Spectral properties of substitution dynamical systems have been studied extensively. These systems are never strongly mixing \cite{DK}, hence there is always a singular spectral component. 
Suppose that the substitution has constant length $q$. Then the group of eigenvalues is non-trivial and contains
the group of $q$-adic rationals; a complete description of the discrete spectral component was given by Dekking \cite{Dek1}. Since we need it later, we state it precisely.
The {\em height} $h$ of the substitution is defined as follows: let
$$
g_0 ={\rm gcd} \{k\ge 1:\ u_k=u_0\},
$$
where $U$ is the fixed point of the substitution (\ref{fixpt}); then define, for a substitution $\zeta$ of constant length $q$:
\be \label{height}
h=h(\zeta) = \max\{n\ge 1:\ (n,q)=1,\ n\ \mbox{divides}\ g_0\}.
\ee
Dekking \cite{Dek1} proved that the group of eigenvalues for $(X_\zeta,T_\zeta,\mu)$, where $\zeta$ is primitive aperiodic of constant length $q$, is $e(\Z(q)\times \Z/h\Z)$, where
$\Z(q)$ is the group of $q$-adic rationals and $e(t) = e^{2\pi i t}$. see also \cite{Martin} for earlier results in this direction. 
Furthermore, Dekking showed that it is always possible to reduce a substitution with $h>1$ to one with $h=1$, called ``pure base''. We show that our sufficient condition for singularity of the substitution can be used with substitutions of any height.

For a  substitution of height one, 
Dekking's coincidence condition \cite{Dek1}  gives an answer for when the spectrum is purely discrete. Thus it remains to analyze the continuous component, when it is present. The work of Queff\'elec \cite{Queff} contains a detailed and extensive study of this question; in particular, she expressed the maximal spectral type in terms of
generalized matrix Riesz products (which sometimes reduce to scalar generalized Riesz products). Not many general results on singularity of the spectrum are known.
Pure singular spectrum has been proved for the Thue-Morse substitution \cite{Kaku} and its generalizations \cite{Queff}, the so-called abelian bijective substitutions (see Section 6 for definitions). For bijective substitutions on two symbols a constructive proof of singularity was given by Baake, G\"ahler, and Grimm \cite{Baake}.

On the other hand, there are substitutions of constant length with a Lebesgue spectral component, such as the  Rudin-Shapiro substitution, see \cite{Queff}, and its generalizations, due to Frank \cite{Frank}, and more recently, by Chan and Grimm \cite{ChGr2}.
Recently Bartlett \cite{Bartlett} reworked (and in the case of one important example corrected) a part of Queff\'elec's theory, extended it to higher-dimensional block-substitutions of ``constant shape,'' and developed an algorithm for checking singularity. However, it proceeds by a ``case-by-case'' analysis and does not provide a general criterion. Our main result is as follows.

\begin{theorem} \label{th-main1}
Let $\zeta$ be a primitive aperiodic substitution of constant length $q$. If  the substitution matrix $\Sf_\zeta$ has no eigenvalue whose absolute value equals $\sqrt{q}$, then the maximal spectral type of the substitution measure-preserving system 
$(X_\zeta,T_\zeta,\mu)$ is singular.
\end{theorem}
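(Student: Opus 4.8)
The plan is to combine Queff\'elec's matrix Riesz product description of the maximal spectral type with a scaling analysis of the spectral measure near the fixed frequency $t=0$. For a substitution of constant length $q$ one forms the matrix-valued trigonometric polynomial $M(t)=\bigl(\sum_{j:\,\zeta(b)_j=a} e(jt)\bigr)_{a,b\in\A}$, for which $M(0)=\Sf_\zeta$, and the associated spectral cocycle $\mathcal{A}_n(t)=q^{-n/2}\,M(t)M(qt)\cdots M(q^{n-1}t)$. Queff\'elec's results express the maximal spectral type (as the trace of a matrix measure $\Sigma$) through the weak-$*$ limit of the products $\mathcal{A}_n(t)\,\mathcal{A}_n(t)^{*}$, and provide the renormalization of $\Sigma$ under the $q$-fold covering $t\mapsto qt$. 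First I would pass to height one via Dekking's reduction, so that, by the height theory quoted above, the purely discrete part of $\Sigma$ is carried by the Perron direction of $\Sf_\zeta$: since the columns of $\Sf_\zeta$ sum to $q$, the value $q$ is the Perron eigenvalue with left eigenvector $\mathbf{1}$, matching the $q$-adic discrete spectrum $e(\Z(q))$. The continuous part $\sigma_c$ then lives on the complementary $\Sf_\zeta$-invariant subspace $V_c$, on which the growth of $\mathcal{A}_n(0)=q^{-n/2}\Sf_\zeta^{\,n}$ is governed by $\lambda$, the largest modulus of an eigenvalue of $\Sf_\zeta$ acting nontrivially on $V_c$; note $\bigl\|\mathcal{A}_n(0)|_{V_c}\bigr\|\asymp (|\lambda|/\sqrt q)^{\,n}$.

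The heart of the argument is a two-sided estimate for the continuous spectral mass of a $q^{-n}$-neighbourhood of $0$. Iterating the renormalization equation $n$ times, with all factors evaluated at $t$ close to $0$ where $M(q^k t)\approx \Sf_\zeta$, I expect
\be
\sigma_c\bigl([-q^{-n},q^{-n}]\bigr)\ \asymp\ q^{-n}\,\bigl\|\mathcal{A}_n(0)|_{V_c}\bigr\|^{2}\ \asymp\ q^{-n}\,(|\lambda|/\sqrt q)^{2n},
\ee
the factor $q^{-n}$ being the scale of the base interval and $\bigl\|\mathcal{A}_n(0)|_{V_c}\bigr\|^{2}$ the matrix amplification. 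Taking logarithms gives the local dimension of the continuous spectral measure at zero,
\be
d(\sigma_c,0)=\lim_{n\to\infty}\frac{\log \sigma_c([-q^{-n},q^{-n}])}{\log q^{-n}}=2-2\log_q|\lambda|=1-2\log_q\!\frac{|\lambda|}{\sqrt q},
\ee
which equals $1$ exactly when $|\lambda|=\sqrt q$, is strictly greater than $1$ when $|\lambda|<\sqrt q$, and strictly less than $1$ when $|\lambda|>\sqrt q$.

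Finally, suppose for contradiction that $(X_\zeta,T_\zeta,\mu)$ has a Lebesgue (absolutely continuous) component, i.e.\ $\sigma_c$ has a nonzero a.c.\ part $f\,dt$. Since the maximal spectral type is quasi-invariant under translation by every eigenvalue and the eigenvalue group contains the dense subgroup $\Z(q)$, the set $\{f>0\}$ is invariant modulo Lebesgue-null sets under a dense group of rotations, hence has full measure; thus $f>0$ a.e., and at Lebesgue-a.e.\ $t$ the density is positive and finite, so $\sigma$ has local dimension $1$ there. It remains to transport this generic value to the fixed point $0$: because the scaling near $0$ is driven by the same cocycle $\mathcal{A}_n$ and $t\mapsto qt$ is ergodic for Lebesgue measure with $0$ as its fixed point, the exponent computed in the previous step must coincide with the a.c.\ value $1$, forcing $|\lambda|=\sqrt q$ and contradicting the hypothesis; both regimes $|\lambda|<\sqrt q$ (local dimension $>1$, $\sigma_c$ too thin at $0$) and $|\lambda|>\sqrt q$ (local dimension $<1$, $\sigma_c$ too thick at $0$) are thereby excluded. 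I expect the main obstacle to be exactly this last bridge together with the two-sided estimate behind the first displayed asymptotic: the lower bound requires ruling out cancellation in the matrix products and controlling the Jordan structure of $\Sf_\zeta$ on $V_c$, while the passage from the local dimension at the single frequency $0$ to the almost-everywhere behaviour of the a.c.\ part is where Queff\'elec's finer description of the a.c.\ density, rather than a soft density argument, has to be invoked.
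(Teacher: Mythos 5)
Your overall strategy --- comparing the exact scaling of the spectral measure at the fixed point $0$ with what a Lebesgue component would force --- is the same as the paper's, but two of your steps have genuine gaps, and one of them is fatal as written. The fatal one is the final ``transport'' step. Quasi-invariance of the maximal spectral type under the dense eigenvalue group $\Z(q)$ shows only that the a.c.\ part is \emph{equivalent} to Lebesgue measure; it does not make it \emph{equal} to Lebesgue, and a measure equivalent to Lebesgue can have any prescribed local dimension at the single point $0$ (take density $|t|^{\beta}$ near $0$). Ergodicity of $t\mapsto qt$ cannot rescue this: $0$ is a fixed point, hence an atypical point for Lebesgue measure, and no ergodic theorem transfers almost-everywhere local dimension to one exceptional point. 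What closes this gap in the paper is the precise form of Queff\'elec's theorem (Theorem~\ref{th-queff}): the continuous part of the maximal spectral type splits into finitely many measures $\lam_1,\dots,\lam_{k-1}$, each \emph{invariant and mixing} for $t\mapsto qt$. An ergodic invariant measure is an extreme point of the set of invariant measures, hence either singular or exactly Haar; so a Lebesgue component forces some $\lam_i$ to be exactly Lebesgue, and then $\lam_i(B_r(0))$ equals $2r$ identically --- that exact identity is what the dimension computation at $0$ contradicts. To run that computation one also needs the paper's Proposition~\ref{decomposition}, writing $\lam_i$ as a finite sum of spectral measures $\sig_f$ of cylindrical functions orthogonal to constants (by diagonalizing the positive semi-definite matrix associated to $v^{(i)}$); your assertion that ``the continuous part lives on $V_c$'' is not a substitute for this, and it conflates the $m\times m$ substitution matrix with the $m^2\times m^2$ coincidence matrix that actually governs Queff\'elec's decomposition.

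The second gap is that your displayed two-sided asymptotic for $\sigma_c([-q^{-n},q^{-n}])$ is precisely the hard part of the theorem, and you only assert it. The paper spends Section~\ref{dimension} proving it (Proposition~\ref{prop-dimzero} and Theorem~\ref{th-dimzero}): it passes to the suspension flow with constant-one roof, uses the H\"older cocycles $\Phi^+_{v,x}$ of Bufetov--Solomyak to get asymptotics of Birkhoff integrals, handles Jordan blocks and several eigenvalues of equal modulus without cancellation (Lemmas~\ref{lem-vspom1} and~\ref{lem-vspom2}, which produce $|\log r|^{\kappa-1}$ corrections and a Dirichlet-type choice of scales along which the unimodular factors $c_i^{n_j}$ are simultaneously close to $1$), and then transfers back to the $\Z$-action via the kernel $\left(\sin(\pi\om)/(\pi\om)\right)^2$ (Lemma~\ref{lem-equivalence}). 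Note also that your two-sided bound is simply false when the controlling eigenvalue has modulus at most $1$: there one only gets the one-sided estimate $\sig_f(B_r(0)) = o(r^{2-\eps})$, which still suffices but must be treated as a separate case. Finally, a smaller omission: after Dekking's reduction to height one you must check that the hypothesis still holds for the pure-base substitution; the paper's Proposition~\ref{prop-reduce} shows the two substitution matrices have the same spectrum up to $0$ and roots of unity, whose moduli are never $\sqrt q$, so the hypothesis survives the reduction.
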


\begin{remark} {\em
(1) The condition for singularity in our theorem is sufficient, but not necessary, see \cite{ChGr1} and Section~\ref{examples} below.

(2) The spectrum of substitution dynamical systems is closely related to diffraction spectrum of aperiodic structures, studied both by physicists and mathematicians. 
For a comprehensive account of this connection and extensive bibliography, see the recent book \cite{BaGr}. It contains, in particular, a constructive rigorous treatment of the spectra of Thue-Morse and Rudin-Shapiro substitutions.
The ``scaling exponent'' $1/2$ of the ``structure factor'' has been linked with absolutely continuous spectrum in the physics literature, see e.g.\ \cite{AGL,GL}, however, as far as we are aware, there has been no rigorous proof of singularity following this line.

(3) Higher-dimensional block substitutions and the corresponding  $\Z^d$ actions have also been studied, motivated in part by the connection with the theory of quasicrystals.
Without an attempt to give a comprehensive list of references, we mention the paper of Frank \cite{Frank2}, which, in particular contains examples of systems with singular spectrum, and the paper of Baake and Grimm \cite{BG}, who proved singularity for higher-dimensional 2-symbol bijective block-substitutions.
We expect that Theorem~\ref{th-main1} can be extended to higher dimensions using the results of Bartlett \cite{Bartlett} and Emme \cite{Emme}, but have not checked  the details.}
\end{remark}

The paper is organized as follows: in the next section we recall some of the background and outline the proof of the theorem. In Section~\ref{Queffelec} we recall the needed results of Queff\'elec and and bring to the form convenient for us.
In Section~\ref{purebase} we prove that the eigenvalues of the substitution matrix after reduction to substitution of height 1 are preserved except for 0  and roots of unity, therefore without loss of generality we can assume that the substitution has height 1.
In Section~\ref{dimension} we obtain results on the dimension of spectral measures at zero and deduce the main theorem. Section~\ref{examples} is devoted to examples and concluding remarks.


\section{Outline of the proof of Theorem~\ref{th-main1}} \label{outline}
Recall that for $f,g\in L^2(X_\zeta,\mu)$ the (complex) spectral measure $\sig_{f,g}$ is determined by the equations
$$\widehat{\sig}_{f,g}(-k) =\int_0^1 e^{2\pi i k \om}\,d\sig_{f,g}(\om)=
\langle f\circ T^k, g\rangle,\ \ k\in \Z,
$$
where $\langle \cdot,\cdot \rangle$ denotes the scalar product in $L^2$. We write $\sig_f = \sig_{f,f}$. Spectral measures ``live'' on the torus $\R/\Z$, which we identify with $[0,1)$.
It is known that $\sig_\One = \delta_0$, where $\One$ is the constant-1 function and $\delta_0$ is the Dirac point mass at $0$. Thus, the ``interesting part'' of the spectrum is generated by functions $f$ orthogonal to constants.
We   say that a function $f\in L^2(X_\zeta,\mu)$ is {\em cylindrical} if it depends only on $x_0$, the 0-th term of the sequence $x\in X_\zeta$. Cylindrical functions form an $m$-dimensional vector space, with a basis
$\{\One_{[a]}: \, a\in \Ak\}$. Denote
$$
\sig_\a:= \sig_{\One_{[\a]}}\ \ \ \mbox{and}\ \ \ \sig_{ab}:= \sig_{\One_{[a]},\One_{[b]}}.
$$
These measures are also known as {\em correlation measures}. 

The key fact which distinguishes constant-length substitutions is that the ``renormalization'', or ``substitution'' action on $X_\zeta$ is closely related  to the familiar times-$q$ map on the torus: $y\mapsto qy$ (mod 1) on $\R/\Z$, where $q$
is the length of the substitution. A measure on the torus is called $q$-mixing if it is invariant and mixing with respect to the times-$q$ map. By the Birkhoff Ergodic Theorem, any $q$-mixing measure is either singular or coincides with the Lebesgue (Haar) measure on the torus.
Further, let
\be \label{eq-convo}
\om = \sum_{n=1}^\infty 2^{-n}\om_{q^n},
\ee
where $\om_{q^n}$ is the Haar measure on the finite group generated by $1/q^n$ on the torus.
It is proved in \cite[Theorem 10.2 and Theorem 11.1]{Queff} that the maximal spectral type of $T_\zeta$ is given by
$$
\wtil{\lam} = \lam * \om,
$$
where $\lam = \lam_0 + \cdots +\lam_{k-1}$, with every $\lam_j$ being $q$-mixing, and moreover, $\lam_j$ is a linear  combination of correlation measures $\sig_{ab},\ a,b\in \Ak$. (In fact, $\lam_0 = \delta_0$.) It follows that in order to prove singularity we only need to rule out that any of the $\lam_j$, $j=1,\ldots,k-1$, is the Lebesgue measure.
We will show that every $\lam_j$, $j\ge 1$, can be expressed as a {\em positive} linear combination of spectral measures corresponding to cylindrical functions orthogonal to the constants. 
It turns out that the local dimension of the spectral measure $\sig_f$ for $f\perp \One$ at zero is equal to $d=2-2\alpha$, where $\alpha = \log|\theta_j|/\log q$ and $\theta_j$ is one of the non-maximal eigenvalues of $\Sf_\zeta$, with 
$|\theta_j|>1$, or else $\lim_{r\to 0} \frac{\sig_f(B(0,r))}{r^{2-\eps}}=0$ for any $\eps>0$. The latter is impossible for a measure with a Lebesgue component, and the former is ruled out by the 
assumptions of Theorem~\ref{th-main1} since we obtain $d\ne 1$.
This concludes the rough outline.


\subsection{Dimension of spectral measure at zero} 
Here we state the result about the dimension of spectral measure at zero, which is of some independent interest. It is a partial extension of a result from a joint work by A. I. Bufetov and the second author 
\cite{BuSo1}; its proof is also adapted from \cite[Section 6]{BuSo1}.
 
Recall that the local
dimension of a positive measure $\nu$ at a point $\om\in \R$ is defined by
$$
d(\nu,\om) = \lim_{r\to 0} \frac{\log \nu(B_r(\om))}{\log r},
$$
when the limit exists.
We order the eigenvalues of $\Sf_\zeta$ by magnitude: $\theta_1=q > |\theta_2| \ge\ldots$ For each eigenvalue $\theta_j$ denote by $P_j$ the ``natural'' projection onto the generalized eigenspace of the transpose $\Sf_\zeta^t$ corresponding to $\theta_j$, so that $P_j$ commutes with $\Sf^t_\zeta$. 

\begin{theorem} \label{th-dimzero}
Let $\zeta$ be a primitive substitution of constant length $q$, with a substitution matrix $\Sf_\zeta$ having eigenvalues $\theta_1=q, \theta_2, \ldots$, ordered by magnitude. Let $f$  be a cylindrical function on $X_\zeta$ given by $f = \sum_{k=1}^m b_k \One_{[k]}$ and consider $\vec{b} = (b_k)_{k=1}^m$. Let $j$ be minimal such that
$
P_j \vec{b} \ne 0.
$

{\bf (i)} 
Suppose that $|\theta_j|>1$. 
Then the spectral measure $\sigma_f$ satisfies
$$
d(\sig_f,0) = 2-2\alpha,\ \ \ \mbox{where}\ \ \alpha = \frac{\log |\theta_j|}{\log \theta_1}.
$$

{\bf (ii)}
If $|\theta_j|\le 1$, then 
$$
\lim_{r\to 0} \frac{\sig_f(B_r(0))}{r^{2-\eps}}= 0,\ \ \mbox{for any}\ \  \eps>0.
$$

\end{theorem}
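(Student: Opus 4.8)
The plan is to convert the statement about the local dimension of $\sig_f$ at $0$ into one about the growth of the $L^2$-norms of the Birkhoff sums $S_N f=\sum_{k=0}^{N-1}f\circ T^k$. By the spectral theorem and the definition of $\sig_f$,
\be \label{eq-plan-dir}
\|S_N f\|_{L^2(\mu)}^2=\int_0^1 |D_N(t)|^2\,d\sig_f(t),\qquad D_N(t)=\sum_{k=0}^{N-1}e^{2\pi i k t},
\ee
where the Dirichlet kernel satisfies $|D_N(t)|^2\asymp N^2$ for $|t|\le\frac{1}{2N}$ and $|D_N(t)|^2\lesssim t^{-2}$ for $\frac{1}{2N}\le|t|\le\frac12$. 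Restricting the integral in (\ref{eq-plan-dir}) to $|t|\le\frac{1}{2N}$ yields the elementary bound $\sig_f(B_{1/(2N)}(0))\lesssim N^{-2}\|S_N f\|_2^2$. With $N=q^n$ this converts any upper bound on $\|S_{q^n}f\|_2^2$ into an upper bound on $\sig_f(B_{q^{-n}}(0))$, hence into a lower bound for the local dimension; the reverse inequality for the measure, needed only in case~{\bf(i)}, is the delicate point discussed below.

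Next I would read off the size of $\|S_{q^n}f\|_2^2$ from the constant-length structure. Writing $f=\sum_a b_a\One_{[a]}$, we have $S_{q^n}f(x)=\langle\vec{b},\vec{c}_n(x)\rangle$, where $\vec{c}_n(x)$ records the number of occurrences of each letter in the length-$q^n$ prefix of $x$. When this window is a level-$n$ block $\zeta^n(a)$, the counts form the $a$-th column of $\Sf_\zeta^n$, so $S_{q^n}f(x)=\bigl((\Sf_\zeta^t)^n\vec{b}\bigr)_a$; this is the point at which the transpose $\Sf_\zeta^t$ and its generalized eigenspaces enter, and it is the frequency-$0$ specialization of Queff\'elec's matrix Riesz product, whose transfer matrix at frequency $0$ equals $\Sf_\zeta$. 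Since $j$ is minimal with $P_j\vec{b}\ne0$, the ordering by magnitude gives $\|(\Sf_\zeta^t)^n\vec{b}\|\asymp|\theta_j|^n n^{p}$ for some $p\ge0$ coming from a Jordan block, with no vanishing along subsequences even for complex $\theta_j$, since a rotation preserves norms. A window of length $q^n$ generically straddles two adjacent level-$n$ blocks, and its letter count then splits, via the base-$q$ (tower) decomposition, into $O(n)$ full sub-blocks of levels $k\le n$; bounding each by $\|(\Sf_\zeta^t)^k\vec{b}\|$ shows that $\|S_{q^n}f\|_2^2$ grows at most like $|\theta_j|^{2n}$ in case~{\bf(i)} and at most polynomially in $n$ in case~{\bf(ii)} (where every $\theta_i$ with $P_i\vec{b}\ne0$ has $|\theta_i|\le1$). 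Combined with the elementary bound of the first paragraph, the case~{\bf(ii)} estimate gives $\sig_f(B_{q^{-n}}(0))=o(q^{-n(2-\eps)})$ and hence $\sig_f(B_r(0))=o(r^{2-\eps})$ for every $\eps>0$, proving case~{\bf(ii)}; the case~{\bf(i)} estimate gives $\sig_f(B_{q^{-n}}(0))\lesssim q^{-n(2-2\alpha)}$ up to a polynomial factor, i.e.\ the lower bound $d(\sig_f,0)\ge2-2\alpha$.

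To finish case~{\bf(i)} I need the matching lower bound $\sig_f(B_{q^{-n}}(0))\gtrsim(|\theta_j|/q)^{2n}$. I would obtain it by iterating, $n$ times, the exact renormalization satisfied by the correlation measures $\sig_{ab}$ (Queff\'elec's functional equation recalled in Section~\ref{Queffelec}): each step multiplies by the transfer matrix $\Sf_\zeta$ and rescales the neighborhood of $0$ by $q^{-1}$, so after $n$ steps the mass of $B_{q^{-n}}(0)$ is bounded below by a constant multiple of $q^{-2n}\|(\Sf_\zeta^t)^n\vec{b}\|^2$ times the mass of a fixed neighborhood of $0$, which is bounded below. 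Since $\|(\Sf_\zeta^t)^n\vec{b}\|^2\asymp|\theta_j|^{2n}$, this gives the required lower bound, and together with the upper estimate above it yields $d(\sig_f,0)=2-2\log|\theta_j|/\log q=2-2\alpha$, after interpolating between consecutive $q$-adic scales by monotonicity of $r\mapsto\sig_f(B_r(0))$.

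The main obstacle is precisely this last lower bound. The Dirichlet/Fej\'er estimate controls $\sig_f$ near $0$ from above but not from below, so one must rule out that cancellation---whether from the straddling boundary terms or, when $\theta_j$ is complex, from the oscillation of $(\Sf_\zeta^t)^n\vec{b}$---drives the mass of $B_{q^{-n}}(0)$ below the order $|\theta_j|^{2n}q^{-2n}$ predicted by the full-block term. This is exactly what the quantitative self-similarity of $\sig_f$ at $q$-adic scales is designed to control, and I would carry it out following \cite[Section~6]{BuSo1}. The Jordan-block factors $n^{p}$ are harmless: they are invisible in the $\log/\log$ limit defining $d(\sig_f,0)$ in case~{\bf(i)}, and are absorbed into the factor $q^{-\eps n}$ in case~{\bf(ii)}.
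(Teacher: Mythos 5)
Your reduction of the upper bounds to Birkhoff sums is sound: the Dirichlet-kernel inequality $\sig_f(B_{1/(2N)}(0))\lesssim N^{-2}\|S_Nf\|_2^2$, together with the prefix/block decomposition of a window of length $q^n$ into $O(n)$ full substitution blocks, correctly yields case (ii) and the bound $\sig_f(B_r(0))\lesssim r^{2-2\alpha}\,\mathrm{poly}(\log 1/r)$ in case (i), i.e.\ $d(\sig_f,0)\ge 2-2\alpha$. This is essentially the same content as the paper's use of the ergodic-integral asymptotics of \cite{BS13} (Corollary~\ref{cor-BS}) combined with \cite[Lemma 4.3]{BuSo1}, except that you work directly on the $\Z$-action, whereas the paper works on the constant-roof suspension flow and transfers back via Lemma~\ref{lem-equivalence}.

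The genuine gap is the lower bound $\sig_f(B_{q^{-n}}(0))\gtrsim q^{-2n}\|(\Sf_\zeta^t)^n\vec{b}\|^2$ in case (i), and the mechanism you propose for it does not work. First, the renormalization identity of Section~\ref{Queffelec} reads $S_q(\Sigma)=\frac1q\,\Cf\,\Sigma$ with $\Cf$ the $m^2\times m^2$ coincidence matrix of the bi-substitution, not $\Sf_\zeta$; the factor $q^{-2n}\|(\Sf_\zeta^t)^n\vec{b}\|^2$ does not fall out of iterating it. Second, and more fundamentally, $S_{q^n}(\Sigma)$ is a pull-back under $y\mapsto q^ny$, so evaluating the iterated identity on a neighborhood of $0$ controls the \emph{sum} $\sum_{k=0}^{q^n-1}\sig_f\bigl(B_{rq^{-n}}(k/q^n)\bigr)$ over all $q^n$ preimage points, not the single term $k=0$. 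To isolate $k=0$ from below you must bound from above the mass of $\sig_f$ near every nonzero $q$-adic rational; but the discrete spectrum of the system sits exactly at the $q$-adic rationals, so $\sig_f$ generically carries atoms there, and these constant-size contributions swamp the decaying quantity $q^{-n(2-2\alpha)}$ you are trying to extract. (An elementary Fej\'er-type converse also fails: the tail $\int_{|t|\ge 1/N}N^{-2}t^{-2}\,d\sig_f(t)$ is of the same order $N^{2\alpha-2}$ as the main term, so cancellation cannot be ruled out this way.) Third, the cancellation issue for several eigenvalues of modulus $|\theta_j|$ is not about $\|(\Sf_\zeta^t)^n\vec{b}\|$ — there "rotation preserves norms" is fine — but about the value of a limiting complex linear combination $\sum_i c_i^N B_i$ with $|c_i|=1$, which can nearly vanish for many $N$; the paper needs the bounded-gap subsequence argument of Lemma~\ref{lem-vspom2} precisely for this. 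What the paper (following \cite[Section 6]{BuSo1}) actually does for the lower bound is different from any iteration of the correlation-measure renormalization: it tests $\sig_F$ against $|\psi(T\om)|^2$ for a Schwartz function $\psi$, integrates by parts to bring in $S(F,x,t)$, identifies the renormalized limit as $\int_{\Xxi}\bigl|\int_\R\Phi^+_{pr'(\vec F),x}(\tau)(\what\psi)'(\tau)\,d\tau\bigr|^2d\wt\mu(x)$ via the cocycle renormalization $\Phi^+_{v,Z(x)}(\theta_1 t)=\Phi^+_{\Sf^tv,x}(t)$, and then uses the continuity and non-degeneracy of the H\"older cocycle (Lemma~\ref{lem-cocycle}(v),(vi)) to show this limit is strictly positive. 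Your deferral "I would carry it out following \cite[Section 6]{BuSo1}" defers exactly this missing core of the proof.
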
 

\begin{remark} {\em (1) We do not exclude the case $j=1$, which is equivalent to $P_1\vec{b} = \int_{X_\zeta} f \,d\mu\ne 0$, and then $\alpha =1$. In fact, then the spectral measure
$\sig_f$ has a point mass at zero and hence the local dimension at $\om=0$ equals zero.

(2) In \cite[Theorem 6.2]{BuSo1} a much stronger conclusion was obtained for the self-similar suspension flow over the substitution $\Z$-action, for a substitution of possibly non-constant length, but under the additional assumption that $\theta_2$ is positive and real of multiplicity one, strictly greater than $|\theta_3|$,
$f$ is orthogonal to constants, and  $j=2$. Then it was shown, in particular, that $\sig_f(B_r(0)) \asymp r^{2-2\alpha}$, with $\alpha = \log\theta_2/\log\theta_1$. We partially extend this result, removing the eigenvalue assumption, and use the fact that in the constant-length case the self-similar suspension flow is obtained simply by taking the constant-one roof function.}
\end{remark}


\section{Results of Queff\'elec and their consequences}\label{Queffelec}

Let $\zeta$ be a substitution of constant length $q$, primitive, aperiodic, and of height one. We summarize the results of Queff\'elec \cite{Queff} on the maximal spectral type. For a reader unfamiliar with the procedure below, it may be useful to follow along with Example~\ref{ex1}.

\begin{defi} {\em (See \cite[10.1.1]{Queff}.)}
Consider the bi-substitution (or ``square'' substitution) $\zeta^{[2]}$ defined on the alphabet $\Ak\times\Ak$ as follows:
$$
\zeta(a,b) = (\zeta(a)_1,\zeta(b)_1)\ldots(\zeta(a)_q,\zeta(b)_q).
$$
It is clearly also of constant length $q$. Its substitution matrix is denoted by ${\sf C}$ and is called the {\bf coincidence matrix} for $\zeta$.
\end{defi}

We continue with the definitions, following \cite[10.1.1]{Queff}. For $(a,b)\in \Ak\times \Ak$ let $\Ok(a,b)$ be the set of 
all pairs $(c,d)\in \Ak\times\Ak$ appearing in 
${(\zeta^{[2]})}^n(a,b)$ for some $n$.
Minimal sets of the form $\Ok(a,b)$ with respect to inclusion are disjoint; they are called {\em ergodic classes} of the bi-substitution and denoted $E_0,\ldots, E_{k-1}$. One always has $1\le k \le m$. The class of ``coincidence pairs'' $\{(a,a):\ a\in \Ak\}$ always forms an ergodic class (this follows from primitivity), denoted by $E_0$. We note that $\zeta^{[2]}$ is never primitive. The class of remaining pairs, if any, is called {\em transitive} and denoted $T$; for any $(a,b)\in T$, the orbit $\Ok(a,b)$ must intersect one of the ergodic classes. Passing from $\zeta$ to $\zeta^j$, if necessary, one can always assume that ${\sf C}$ restricted to any ergodic class is primitive.
It is not hard to see that the multiplicity of the Perron-Frobenius (PF) eigenvalue $q$ of $\Cf$ is exactly $k$, the number of ergodic classes, \cite[Lemma 10.1]{Queff}. The next theorem is \cite[Prop.\ 10.1]{Queff}; it is essentially a reformulation of Dekking's criterion \cite{Dek1} mentioned above.

Denote the entries of $\Cf$ by $\Cf_{ab}^{cd}$, where $(a,b), (c,d)\in \Ak\times \Ak$. Consider also the vector of dimension $m^2$ of all correlation measures
$$
\Sigma:= (\sig_{ab})_{(a,b)\in \Ak\times \Ak}.
$$
Thus $\Cf\,\Sigma$ is a well-defined vector of measures. Queff\'elec \cite[p.\,244]{Queff} proves an interesting formula, which already hints at times-$q$ invariance of spectral measures:
$$
S_q(\Sigma) = \frac{1}{q} \Cf\, \Sigma,
$$
where $S_q(\Sigma)$ denotes the pull-back of $\Sigma$ under the times-$q$ map on $\R/\Z$.

\begin{theorem}{\em (Dekking)} 
Let $\zeta$ be a primitive aperiodic substitution of constant length $q$ and height one. Then $(X_\zeta,T_\zeta,\mu)$ has purely discrete spectrum if and only if $E_0$ is the only ergodic class of $\Cf$.
\end{theorem}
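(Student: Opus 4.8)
The plan is to read discreteness of the spectrum directly off the decomposition of the maximal spectral type $\wtil{\lam} = \lam * \om$ recalled in Section~\ref{outline}, where $\lam = \lam_0 + \cdots + \lam_{k-1}$, $\lam_0 = \delta_0$, every $\lam_j$ is $q$-mixing, and $k$ is the number of ergodic classes (the multiplicity of $q$ in $\Cf$). The first step is to observe that the spectrum is purely discrete if and only if $\wtil{\lam}$ is atomic, and that this holds if and only if $\lam$ is atomic. Indeed, $\om = \sum_n 2^{-n}\om_{q^n}$ is supported on the $q$-adic rationals, hence purely atomic; writing $\lam = \lam_{pp} + \lam_c$ for its atomic and continuous parts, one has $\lam*\om = \lam_{pp}*\om + \lam_c*\om$, where the first summand is atomic while the second is continuous (its value on a singleton is $\int \lam_c(\{x-t\})\,d\om(t)=0$) and nonzero as soon as $\lam_c\neq 0$ (total mass $\lam_c(\T)\,\om(\T)>0$). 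Thus $\wtil{\lam}$ is atomic exactly when $\lam_c=0$, and since the $\lam_j$ are positive measures, $\lam$ is atomic iff each $\lam_j$ is atomic.

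For the ``if'' direction, suppose $E_0$ is the only ergodic class, so $k=1$. Then $\lam=\lam_0=\delta_0$ is atomic, $\wtil{\lam}=\delta_0*\om=\om$, and the spectrum is purely discrete.

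For the ``only if'' direction I argue by contraposition: assuming $k\ge 2$, I will produce a continuous component of $\wtil{\lam}$. Fix $j\ge 1$ with $\lam_j\neq 0$ (such a $j$ exists because the $q$-mixing components attached to the $k\ge 2$ ergodic classes are nonzero). Two properties are used. First, $\lam_j$ is $q$-mixing, hence ergodic for the times-$q$ map; so if it were atomic it would be carried by a single periodic orbit, and mixing would force that orbit to be a fixed point, giving $\lam_j=c\,\delta_x$ with $qx=x \pmod 1$, i.e.\ $x=p/(q-1)$. Second, $\lam_j$ has no atom at $0$: since $\lam_j$ ($j\ge1$) is a positive combination of spectral measures $\sig_f$ with $f\perp\One$, unique ergodicity and the mean ergodic theorem give $\sig_f(\{0\})=\lim_N\frac1N\sum_{n<N}\langle f\circ T^n,f\rangle=\bigl|\int f\,d\mu\bigr|^2=0$, whence $\lam_j(\{0\})=0$. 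Combining the two, an atomic $\lam_j$ would be $c\,\delta_{p/(q-1)}$ with $p\not\equiv 0\pmod{q-1}$, so $\wtil{\lam}=\lam*\om$ would carry an atom at a point of $p/(q-1)+\Z(q)$ lying outside the group $\Z(q)$ of $q$-adic rationals. But for a height-one substitution the atoms of the maximal spectral type are exactly its eigenvalues, which by the description recalled in Section~\ref{intro} form $e(\Z(q))$. This contradiction shows $\lam_j$ is not atomic, so $\lam$ has a continuous part and the spectrum is not purely discrete.

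The main obstacle is precisely the structural input invoked above, namely that each $\lam_j$ with $j\ge 1$ is a \emph{nonzero} positive combination of spectral measures $\sig_f$ of mean-zero cylindrical functions, so that $\lam_j(\{0\})=0$ while $\lam_j\neq 0$; equivalently, that the $q$-mixing components attached to ergodic classes other than $E_0$ are nondegenerate and mutually singular with $\delta_0$. This is the nonformal part of Queff\'elec's analysis of the coincidence matrix and is what prevents the ``only if'' direction from being a mere consequence of times-$q$ invariance. Everything else reduces to the elementary behaviour of convolution by the atomic measure $\om$ and to the classification of atomic ergodic measures of the times-$q$ map.
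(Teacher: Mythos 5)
Your argument is correct, but be aware that the paper contains no proof of this statement at all: it is quoted as \cite[Prop.~10.1]{Queff}, a reformulation of Dekking's criterion, whose original proof in \cite{Dek1} runs along completely different lines (Dekking identifies the Kronecker factor with the $q$-adic odometer and shows that the coincidence condition is equivalent to the factor map being a.e.\ one-to-one, hence to a metric isomorphism with a group rotation). What you do instead is derive the criterion from two \emph{other} results quoted in the paper: Queff\'elec's structure theorem (Theorem~\ref{th-queff}) and Dekking's eigenvalue-group theorem ($e(\Z(q))$ in height one). Given those inputs, your steps check out: the convolution argument showing $\lam*\om$ is atomic iff $\lam$ is atomic is sound, the ``if'' direction is immediate when $k=1$, and in the ``only if'' direction you correctly observe that $q$-mixing alone does not exclude $\lam_j=\delta_{p/(q-1)}$ (a fixed point of times-$q$), so that the eigenvalue-group theorem is genuinely needed to forbid an atom at a non-$q$-adic point. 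Two caveats. First, the legitimacy of your route rests on the imported theorems being logically independent of the criterion being proved; this is true --- Queff\'elec's decomposition into $q$-mixing components comes from the matrix Riesz-product analysis, and Dekking establishes the eigenvalue group before and independently of the coincidence criterion --- but you should say so explicitly, since otherwise the argument looks potentially circular. Second, your detour through Proposition~\ref{decomposition} to get $\lam_j(\{0\})=0$ inverts the paper's logic: the proof of that proposition itself obtains $\lam_j(\{0\})=0$ from the mutual singularity of the distinct ergodic measures $\lam_j$ and $\lam_0=\delta_0$, so you can invoke that singularity directly and skip the decomposition. In sum, your approach buys a short proof at the price of heavy machinery (the full identification of the maximal spectral type), whereas Dekking's original approach is self-contained and more elementary.
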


From now on, we assume that the substitution is not purely discrete and there are at least two ergodic classes of $\Cf$. Denote by $F$ the $k$-dimensional eigenspace of $\Cf^t$ corresponding to the PF eigenvalue $q$. Since $\zeta$ has constant length $q$, it follows that every vector in $F$ is {\em constant} on each ergodic class and zero on the transitive class,
see \cite[Prop.\ 10.2]{Queff} for details. (The basic reason is that $(1,\ldots,1)$ is the PF left eigenvector for any constant length substitution.)

\begin{defi} {\em (See \cite[p.251]{Queff})} To every vector $v\in \C^{\Ak\times \Ak} = \C^{m^2}$ we associate the $m\times m$ matrix $(v_{ab})_{a,b\in \Ak}$. The vector 
$v$ is called {\bf strongly positive}, denoted $v\gg 0$, if the associated matrix is Hermitian positive semi-definite. Denote by $F_+$ the set $\{v\in F:\ v\gg 0\}$.
\end{defi}

For $v=(v_{ab})_{a,b\in \Ak}$ let
$$
v\Sigma = \sum_{a,b\in \Ak} v_{ab}\sig_{ab}.
$$

Now we can state the theorem of Queff\'elec, which identifies the maximal spectral type. It is a combination of \cite[Theorem 10.1, Theorem 10.2, and Theorem 11.1]{Queff}.

\begin{theorem} {\em (Queff\'elec)} \label{th-queff} Let $\zeta$ be a primitive aperiodic substitution of constant length $q$ and height one.
Consider
$$
\Qk:= \{v\in F_+:\ v_{aa}=1,\ a\in \Ak\},
$$
using the notation introduced above,
and let $v^{(0)},\ldots, v^{(k-1)}$ be the extreme points of the convex set $\Qk$. Then $\lam_i = v^{(i)}\Sigma$, $i=0,\ldots,k-1$, are distinct $q$-mixing probability measures, with $v^{(0)} = (1,\ldots,1)$ and $\lam_0 = \delta_0$, and the maximal spectral type of $(X_\zeta,T_\zeta,\mu)$ is
$$
(\lam_0 + \cdots + \lam_{k-1}) * \om,
$$
where $\om$ is defined by (\ref{eq-convo}).
\end{theorem}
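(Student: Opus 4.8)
Since this theorem is explicitly assembled from three results of Queff\'elec, my plan is to reconstruct the logical skeleton linking them rather than to reprove them from scratch, taking the renormalization identity $S_q(\Sigma)=\frac1q\Cf\,\Sigma$ as the point of departure. First I would extract its elementary algebraic consequences. If $v\in F$, i.e.\ $\Cf^{t}v=qv$, then pairing the identity against $v$ and using linearity of the pull-back gives
$$S_q(v\Sigma)=v\,S_q(\Sigma)=\tfrac1q\,v\,\Cf\,\Sigma=\tfrac1q(\Cf^{t}v)\,\Sigma=v\Sigma,$$
so that every measure $v\Sigma$ with $v\in F$ is times-$q$ invariant. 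Evaluating total masses, the mass of $\sig_{ab}$ equals $\langle\One_{[a]},\One_{[b]}\rangle$, which is $\mu([a])$ when $a=b$ and $0$ otherwise; hence the normalization $v_{aa}=1$ built into $\Qk$ forces each $v\Sigma$ to be a probability measure. This already confines the remaining content to the finite-dimensional space $F$.

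Next I would establish that $v\Sigma$ is a genuine positive measure precisely when $v\gg 0$. The key observation is that the matrix of correlation measures $(\sig_{ab})$ is positive definite: for any $c=(c_a)\in\C^m$,
$$\sum_{a,b}c_a\overline{c_b}\,\sig_{ab}=\sig_{f},\qquad f=\sum_{a}c_a\One_{[a]},$$
which is a nonnegative measure. Decomposing a Hermitian positive semidefinite matrix $(v_{ab})$ spectrally into a sum of rank-one terms $c\,\overline{c}^{\,t}$, the measure $v\Sigma$ becomes a finite \emph{positive} combination of spectral measures $\sig_f$ of cylindrical functions, which is exactly the representation the outline needs later. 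The set $\Qk=\{v\in F_+:\ v_{aa}=1\}$ is convex, and compact because positive semidefiniteness with unit diagonal forces $|v_{ab}|\le 1$ by Cauchy--Schwarz. By Krein--Milman it is the convex hull of its extreme points, and since $v\mapsto v\Sigma$ is affine, the extreme points are the natural candidates for the indecomposable (ergodic) invariant measures of the times-$q$ system.

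It remains to identify the extreme points and their measures. Because every $v\in F$ is constant on each ergodic class, $\Qk$ is parametrized by the $k-1$ free values on the non-coincidence ergodic classes, the value on $E_0$ being pinned to $1$ by the diagonal constraint (all diagonal pairs lie in $E_0$). Queff\'elec's Theorems~10.1--10.2 identify exactly $k$ extreme points $v^{(0)},\ldots,v^{(k-1)}$ and show that the associated measures $\lam_i=v^{(i)}\Sigma$ are distinct. For $v^{(0)}=(1,\ldots,1)$ one computes
$$\lam_0=\sum_{a,b}\sig_{ab}=\sig_{\One}=\delta_0,\qquad \One=\sum_{a}\One_{[a]},$$
matching the known atom at $0$. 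The genuinely analytic inputs, which I would quote rather than reprove, are Theorem~11.1: that each extreme measure is not merely invariant but $q$-\emph{mixing}, and that the full maximal spectral type is obtained by convolving $\lam_0+\cdots+\lam_{k-1}$ with the factor $\om$ of (\ref{eq-convo}), the convolution encoding the $q$-adic odometer part of the spectrum built over the mixing components.

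The main obstacle is exactly the part I would not attempt to reprove. The invariance, positivity, and normalization are elementary consequences of the renormalization identity and of the positive-definiteness of $(\sig_{ab})$; but the passage from invariance to $q$-mixing, the rigidity forcing the number of extreme points to equal $\dim F=k$, and the appearance of the convolution factor $\om$ all rest on Queff\'elec's careful analysis of the generalized matrix Riesz products representing $\Sigma$. For the present paper this theorem is used only as a black box feeding the dimension estimates at zero, so I would state it with full attribution and verify only that the renormalization identity and the positivity representation are in the form required downstream.
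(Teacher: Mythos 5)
Your proposal is correct and takes essentially the same route as the paper: the paper offers no independent proof of this statement, presenting it precisely as a combination of Queff\'elec's Theorems 10.1, 10.2 and 11.1 — the same deep inputs ($q$-mixing, the count and distinctness of the extreme points, and the convolution with $\om$) that you quote as black boxes. The elementary scaffolding you verify yourself (times-$q$ invariance of $v\Sigma$ from $S_q(\Sigma)=\frac{1}{q}\Cf\,\Sigma$, positivity of $v\Sigma$ via the rank-one decomposition of a semidefinite $(v_{ab})$, the probability normalization from $\widehat{\sig}_{ab}(0)=\delta_{ab}\mu([a])$, and $\lam_0=\sig_\One=\delta_0$) matches the material the paper itself places around the statement, in particular the renormalization identity quoted before the theorem and the diagonalization argument of Proposition~\ref{decomposition}.
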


Note that it follows from the Birkhoff Ergodic Theorem that distinct $\lam_i$ are mutually singular.

\begin{prop}\label{decomposition}
Every measure $\lam_i$ for $i=1,\ldots,k-1$, can be represented as a sum of spectral measures $\sig_f$, where $f$ is a cylindrical function on $X_\zeta$ orthogonal to constants.
\end{prop}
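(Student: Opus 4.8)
The plan is to exploit that $v^{(i)}\gg 0$, i.e.\ the associated $m\times m$ matrix $M:=(v^{(i)}_{ab})_{a,b\in\Ak}$ is Hermitian positive semi-definite, and to split it into rank-one pieces. First I would apply the spectral theorem to write $M=\sum_{\ell}\mu_\ell\,w^{(\ell)}(w^{(\ell)})^*$ with $\mu_\ell>0$ and $w^{(\ell)}=(w^{(\ell)}_a)_{a\in\Ak}$ an orthonormal family of eigenvectors (only the strictly positive eigenvalues contribute). Setting $f_\ell:=\sqrt{\mu_\ell}\sum_{a\in\Ak}w^{(\ell)}_a\One_{[a]}$, which is cylindrical, and expanding $\sig_{f_\ell}=\sig_{f_\ell,f_\ell}$ by sesquilinearity of $(f,g)\mapsto\sig_{f,g}$, one gets $\sig_{f_\ell}=\mu_\ell\sum_{a,b}w^{(\ell)}_a\overline{w^{(\ell)}_b}\,\sig_{ab}$. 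Summing over $\ell$ and using $M_{ab}=v^{(i)}_{ab}$ recovers $\sum_\ell\sig_{f_\ell}=\sum_{a,b}v^{(i)}_{ab}\sig_{ab}=v^{(i)}\Sigma=\lam_i$. Thus $\lam_i$ is automatically a sum of spectral measures of cylindrical functions; the only thing left to arrange is that each $f_\ell$ be orthogonal to the constants.

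The hard part is precisely this orthogonality, since a priori only the total $\lam_i$, and not the individual summands, is controlled. My plan is to show that the letter-frequency vector $\vec{p}=(\mu([a]))_{a\in\Ak}$ lies in the kernel of $M$. To this end I would compute the atom of each correlation measure at $0$: by ergodicity, the eigenvalue-$1$ eigenspace of the Koopman operator is spanned by $\One$, so the orthogonal projection of $\One_{[a]}$ onto it is $\mu([a])\One$, whence $\sig_{ab}(\{0\})=\mu([a])\mu([b])=p_ap_b$. Consequently $\lam_i(\{0\})=\sum_{a,b}v^{(i)}_{ab}p_ap_b=\langle M\vec{p},\vec{p}\rangle$. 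Now for $i\ge 1$ the measure $\lam_i$ is mutually singular to $\lam_0=\delta_0$ (as noted after Theorem~\ref{th-queff}, this follows from the Birkhoff Ergodic Theorem), so $\lam_i(\{0\})=0$; since $M$ is positive semi-definite and $\vec{p}$ is real, this forces $M\vec{p}=0$, that is $\vec{p}\in\ker M$.

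Finally I would conclude as follows: because $M$ is Hermitian, the eigenvectors $w^{(\ell)}$ belonging to the strictly positive eigenvalues lie in the orthogonal complement of $\ker M$, hence $\langle w^{(\ell)},\vec{p}\rangle=0$ for every $\ell$. As $\vec{p}$ is real this reads $\sum_a w^{(\ell)}_a\,\mu([a])=0$, i.e.\ $\int_{X_\zeta}f_\ell\,d\mu=0$, so each $f_\ell$ is orthogonal to constants, as required. I expect the genuinely substantive step to be the identification $\vec{p}\in\ker M$: the rank-one splitting and the sesquilinear expansion are routine, whereas it is the combination of the atom computation $\sig_{ab}(\{0\})=p_ap_b$ with the mutual singularity $\lam_i\perp\delta_0$ that upgrades ``the sum has no mass at $0$'' to ``each summand may be taken orthogonal to the constants.''
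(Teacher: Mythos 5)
Your proposal is correct and follows essentially the same route as the paper: the rank-one spectral decomposition of the positive semi-definite matrix $(v^{(i)}_{ab})$, the definition of the cylindrical functions $f_\ell$, and the sesquilinear expansion giving $\lam_i=\sum_\ell \sig_{f_\ell}$ are exactly the paper's first steps, and both arguments ultimately rest on the mutual singularity $\lam_i\perp\lam_0=\delta_0$ together with the ergodicity-based identification of the atom at zero. The one place you diverge is the extraction of orthogonality to constants: the paper argues directly that, since each $\sig_{f_\ell}$ is a \emph{positive} measure and $\lam_i(\{0\})=0$, every summand satisfies $\sig_{f_\ell}(\{0\})=0$, and then invokes the equivalence $\sig_f(\{0\})=0 \iff \langle f,\One\rangle_{L^2}=0$ (the same fact underlying your computation $\sig_{ab}(\{0\})=\mu([a])\mu([b])$). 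Your detour --- showing $\langle M\vec{p},\vec{p}\,\rangle=0$, hence $M\vec{p}=0$, hence the eigenvectors with positive eigenvalues are orthogonal to $\vec{p}$ --- is a valid linear-algebra repackaging of the same point, but the step you flag as ``genuinely substantive'' is in fact immediate once one remembers that spectral measures of single functions are positive, which is what the paper's shorter version exploits.
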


\begin{proof}
We have $\lam_i = v^{(i)}\Sigma$ by Queff\'elec's Theorem. Let $(v^{(i)}_{ab})_{a,b\in \Ak}$ be the positive semi-definite $m\times m$ matrix associated to $v^{(i)} \in \C^{\Ak\times \Ak}$. It can be diagonalized: there exists an  orthonormal basis $\{(d_{j,a})_{a\in \Ak},\ j=1,\ldots,m\}$, such that 
$$
v^{(i)}_{ab} = \sum_{j=1}^m \kappa_j \ov{d_{j,a}} d_{jb}\ \ \Longrightarrow\ \ \lam_i = \sum_{a,b} v^{(i)}_{ab} \sig_{ab} = \sum_{j=1}^m \kappa_j \Bigl(\sum_{a,b}  \ov{d_{j,a}} d_{j,b}\sig_{ab}\Bigr),
$$  
for some $\kappa_j\ge 0$. We then let $f_j = \sqrt{\kappa_j} \sum_{a\in \Ak} \ov{d_{j,a}}\One_a$ and observe that $\lam_i = \sum_{j=1}^m  \sig_{f_j}$. (The latter follows from the basic properties of spectral measures: $\sig_{f+g}= \sig_f + \sig_g + \sig_{f,g} + \sig_{g,f},\ \sig_{f,g} = \ov{\sig_{g,f}},\ \sig_{cf,g} = c\,\sig_{f,g}$.) 
It remains to show that all $f_j$, for which $\kappa_j>0$, are
orthogonal to constants. We have $\lam_0 = \delta_0$ and $\lam_i\perp \lam_0$ for $i\ge 1$, hence $\lam_i(\{0\})=0$. Since $\sig_f(\{0\})=0$ if and only if $\langle f,\One\rangle_{L^2}=0$, the claim follows.
\end{proof}

\begin{corollary} \label{cor-leb} Under the assumptions of Theorem~\ref{th-queff}, if $(X_\zeta,T_\zeta,\mu)$ has a Lebesgue component in the spectrum, then for some cylindrical functions $f_j$ we have
$\sum_j \sig_{f_j}([0,r]) = r$.
\end{corollary}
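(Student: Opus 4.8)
The plan is to trace the Lebesgue component back through the decomposition of the maximal spectral type provided by Theorem~\ref{th-queff}, and then invoke Proposition~\ref{decomposition}. By Theorem~\ref{th-queff} the maximal spectral type equals $(\lam_0+\cdots+\lam_{k-1})*\om$, so it suffices to show that a Lebesgue component in this measure forces one of the $\lam_i$, with $i\ge 1$, to coincide with Lebesgue (Haar) measure on the torus.

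First I would record that $\om$, defined in (\ref{eq-convo}), is purely atomic: each $\om_{q^n}$ is the normalized counting measure on the finite cyclic group generated by $1/q^n$, and a countable convex combination of atomic measures is atomic. The elementary fact I would use is that the convolution of a measure singular with respect to Haar measure with a purely atomic measure remains singular: writing $\om=\sum_j p_j\,\delta_{t_j}$, we get $\nu*\om=\sum_j p_j\,\nu_{t_j}$, a countable convex combination of translates of $\nu$, each of which is singular since Haar measure is translation invariant, so a common Haar-null carrier is obtained by taking a countable union. Consequently the absolutely continuous part of $(\sum_i\lam_i)*\om$ can arise only from those $\lam_i$ that themselves possess a nonzero Lebesgue part.

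Next I would apply the dichotomy recalled in Section~\ref{outline}: each $\lam_i$ is $q$-mixing, hence by the Birkhoff Ergodic Theorem is either singular or equal to Lebesgue measure. Thus if the spectrum has a Lebesgue component, at least one $\lam_i$ must in fact be Lebesgue measure. Since $\lam_0=\delta_0$ is a point mass, and the distinct $\lam_i$ are mutually singular, this index satisfies $i\ge 1$.

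Finally, Proposition~\ref{decomposition} expresses this $\lam_i$ as a finite sum $\sum_j\sig_{f_j}$ of spectral measures of cylindrical functions orthogonal to the constants. As $\lam_i$ is then the (probability) Haar measure on $\R/\Z\cong[0,1)$, we have $\lam_i([0,r])=r$, whence $\sum_j\sig_{f_j}([0,r])=r$, as claimed. The only genuine point requiring care is the measure-theoretic step isolating the absolutely continuous part, namely that convolution with the atomic measure $\om$ cannot manufacture a Lebesgue component out of singular pieces; every other step is a direct reading of the preceding results.
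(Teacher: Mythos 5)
Your proposal is correct and follows exactly the route the paper intends (and leaves implicit): the maximal spectral type $(\lam_0+\cdots+\lam_{k-1})*\om$ from Theorem~\ref{th-queff}, the singular-or-Lebesgue dichotomy for $q$-mixing measures via the Birkhoff Ergodic Theorem, the observation that convolving with the purely atomic measure $\om$ cannot create an absolutely continuous part, and then Proposition~\ref{decomposition} applied to the $\lam_i$ that must equal Haar measure. Your spelling out of the atomic-convolution step is a welcome detail that the paper takes for granted.
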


At this point we can already deduce ``one half'' of the main Theorem~\ref{th-main1}, modulo the reduction to pure base in the next section.

\begin{corollary} \label{cor-half} Let $\zeta$ be a primitive aperiodic substitution of constant length $q$. If the substitution matrix $\Sf_\zeta$ has the second in modulus eigenvalue $\theta_2$, satisfying
$|\theta_2| < \sqrt{q}$, then the maximal spectral type of the substitution dynamical system is singular.
\end{corollary}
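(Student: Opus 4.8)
The plan is to combine Corollary~\ref{cor-leb} with part (ii) of Theorem~\ref{th-dimzero}. The hypothesis $|\theta_2| < \sqrt{q}$ is exactly the statement that every non-maximal eigenvalue $\theta_j$ of $\Sf_\zeta$ satisfies $|\theta_j| < \sqrt{q}$, and I want to translate this into a statement about the spectral measures $\sigma_{f_j}$ of the cylindrical functions appearing in the decomposition of $\lambda_i$.

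Let me think about how the eigenvalues of $\Sf_\zeta$ relate to the setup. Theorem~\ref{th-dimzero} is stated in terms of the projections $P_j$ onto generalized eigenspaces of $\Sf_\zeta^t$. For a cylindrical function $f = \sum_k b_k \One_{[k]}$ with coefficient vector $\vec{b}$, the relevant eigenvalue is $\theta_j$ where $j$ is minimal with $P_j\vec{b} \neq 0$. Since $f_j \perp \One$, we have $P_1 \vec{b} = 0$ (as $P_1$ corresponds to $\theta_1 = q$ and $P_1\vec{b} \neq 0$ iff $\int f\,d\mu \neq 0$), so the minimal index is at least $2$, and the relevant eigenvalue satisfies $|\theta_j| \le |\theta_2| < \sqrt{q}$.

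So the main reduction is: I need to first establish the pure-base reduction (Section~\ref{purebase}) so that Theorem~\ref{th-queff} applies and the eigenvalues other than $0$ and roots of unity are preserved; then argue. Suppose, for contradiction, that the spectrum has a Lebesgue component. By Corollary~\ref{cor-leb}, there are cylindrical functions $f_j$ with $\sum_j \sigma_{f_j}([0,r]) = r$. Each $f_j$ is orthogonal to constants (this is part of Proposition~\ref{decomposition}), so for each $j$ the minimal index with $P_\ell \vec{b}_j \neq 0$ is at least $2$, whence the governing eigenvalue has modulus at most $|\theta_2| < \sqrt{q}$. Since $|\theta_2|<\sqrt q$ means every such eigenvalue satisfies either $|\theta_\ell| \le 1$ or $1 < |\theta_\ell| < \sqrt{q}$, I apply Theorem~\ref{th-dimzero}: in case (ii) (when $|\theta_\ell|\le 1$) we get $\sigma_{f_j}(B_r(0)) = o(r^{2-\eps})$; in case (i) (when $1 < |\theta_\ell| < \sqrt{q}$) we get $d(\sigma_{f_j},0) = 2 - 2\alpha$ with $\alpha = \log|\theta_\ell|/\log q < 1/2$, hence $d(\sigma_{f_j},0) > 1$, which again forces $\sigma_{f_j}(B_r(0))/r \to 0$. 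Summing over the finitely many $j$ gives $\sum_j \sigma_{f_j}([0,r]) = o(r)$, contradicting $\sum_j \sigma_{f_j}([0,r]) = r$.

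**The main obstacle** I expect is making the interplay between the two eigenvalue orderings precise. Theorem~\ref{th-dimzero} indexes eigenvalues of $\Sf_\zeta$ (equivalently $\Sf_\zeta^t$) by magnitude and picks out the one governing $\vec b$ via the minimal nonzero projection; Corollary~\ref{cor-half} is phrased directly in terms of $|\theta_2|<\sqrt q$. I must verify that orthogonality to constants removes exactly the top eigenvalue $\theta_1 = q$ (via $P_1\vec b=0$) so that the governing eigenvalue is bounded by $|\theta_2|$, and that $\alpha < 1/2$ translates to local dimension strictly exceeding $1$. The other delicate point is the pure-base reduction: I must confirm that passing to height one does not create a new eigenvalue of modulus $\sqrt q$ and does not destroy the hypothesis $|\theta_2|<\sqrt q$; this is precisely the content of Section~\ref{purebase}, which I may invoke. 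Everything else is a short deduction from the quoted results.
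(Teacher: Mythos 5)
Your proof is correct, but it takes a genuinely different route from the paper's own proof of this corollary. The paper deliberately proves Corollary~\ref{cor-half} \emph{without} the machinery of Section~\ref{dimension}: after the same reduction to height one (Proposition~\ref{prop-reduce}), it quotes the earlier result \cite[Corollary 3.11]{BuSo1}, which rests on Adamczewski's discrepancy estimates \cite{Adam} and directly yields, for every cylindrical $f$ orthogonal to constants and under $|\theta_2|<\sqrt q$, a bound on $\sig_f(B_r(0))$ strong enough to rule out linear growth, and then concludes via Corollary~\ref{cor-leb} exactly as you do. (As an aside, the exponent displayed in the paper's equation (\ref{Adam}) reads $o(r^{\alpha-\eps})$ with $\alpha<1/2$, which by itself would not contradict a Lebesgue component; the intended exponent is one exceeding $1$, e.g.\ $2-2\alpha-\eps$.) You instead obtain the local bound from Theorem~\ref{th-dimzero}: since $\One$ is the PF eigenvector of $\Sf_\zeta^t$ in the constant-length case, $f\perp\One$ is equivalent to $P_1\vec{b}=0$, so the governing eigenvalue has modulus at most $|\theta_2|<\sqrt q$; then case (i) gives local dimension $2-2\alpha>1$ and case (ii) gives $o(r^{2-\eps})$, and either way $\sig_{f_j}(B_r(0))=o(r)$, contradicting $\sum_j\sig_{f_j}([0,r])=r$. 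This is precisely how the paper proves the full Theorem~\ref{th-main1} (``follows from Corollary~\ref{cor-leb} and Theorem~\ref{th-dimzero}''), so your argument is in effect that proof specialized to $|\theta_2|<\sqrt q$; it is not circular, since Theorem~\ref{th-dimzero} is established independently of this corollary. The trade-off: the paper's route shows that this half of the main theorem already follows from previously known results and is available as soon as Section~\ref{Queffelec} and the pure-base reduction are in place, whereas your route needs the full strength of the new dimension-at-zero theorem of Section~\ref{dimension}, but requires no input external to the paper. Your handling of the delicate points (orthogonality to constants killing exactly $P_1$, the pure-base step preserving the hypothesis because the spectra differ only by $0$ and roots of unity, all of modulus at most $1<\sqrt q$) is accurate.
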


\begin{proof}
As we show in the next section (see Proposition~\ref{prop-reduce} below), without loss of generality we can assume that the height of substitution equals 1.
In \cite[Corollary 3.11]{BuSo1} it is proved that for any cylindrical function $f$ orthogonal to constant, under the conditions of the corollary, for any $\eps>0$:
\be \label{Adam}
\sig_f(B_r(0))  = o(r^{\alpha-\eps}),\ \ \mbox{as}\ r\to 0,\ \ \mbox{where}\ \ \alpha = \log_q|\theta_2|< 1/2,
\ee
and this  implies the claim, in view of Corollary~\ref{cor-leb}. (We should note that (\ref{Adam})  follows from the work of Adamczewski \cite{Adam}, with an additional argument given in \cite{BuSo1}.)
\end{proof}

The remaining case of the theorem, when $|\theta_2|>\sqrt{q}$ and no other eigenvalue of $\Sf_\zeta$ has absolute value $\sqrt{q}$, will require extra work, done in Section 5.

\section{Reduction to the pure base.}\label{purebase}

In this section we prove that the difference in the spectrum of the substitution
matrix and the pure base substitution matrix can only be in 0 or in the roots of unity,
and therefore our main result holds for substitutions of arbitrary height. 

For the proof we rely on the results on Durand (\cite{Du}) concerning the spectrum of return word substitutions.
For a primitive substitution $\zeta$ with the fixed point $U\in{\mathcal A}^\mathbb N$ and its prefix 
$u\in{\mathcal A}^+$, we call a subsequence $v$ of $U$ {\it a return word} on $u$ if $u$ is
a prefix of $vu$ and the word $u$ appears in the sequence $vu$ exactly twice.

Since the substitution $\zeta$ is primitive, every block occurs in $U$ with bounded gaps, hence the set of return words, denoted ${\mathcal R}_{U,u}$, is finite.
 There is a unique way to write the sequence $U$ as a concatenation
of return words. The substitution $\zeta$ induces a substitution on the set of return words, which we denote
by $\Theta_{U,u}\colon {\mathcal R}_{U,u}\to {\mathcal R}_{U,u}^+$. By \cite[Proposition 9]{Du} the matrices of substitutions
$\zeta$ and $\Theta_{U,u}$ have the same spectrum, with the possible exceptions of 0 and the roots of unity.

\begin{prop} \label{prop-reduce}
Let $\zeta$ be a substitution of height $h> 1$ with substitution matrix $S_\zeta$, and $\eta$ its corresponding
pure base substitution (cf. \cite[Lemma 17]{Dek1}) with substitution matrix $S_\eta$. Then  the spectra of matrices $S_\eta$ and $S_\zeta$ may differ only by 0 or the roots of unity.
\end{prop}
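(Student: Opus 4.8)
The plan is to connect the substitution matrix $S_\zeta$ to the return-word substitution matrix, and then to relate the latter to the pure-base matrix $S_\eta$, invoking Durand's result at the first step to control the spectral difference up to $0$ and roots of unity.

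The key observation is that the construction of the pure base in \cite[Lemma 17]{Dek1} can be realized through return words. Recall the height $h=h(\zeta)$ is defined via $g_0 = \gcd\{k\ge 1: u_k = u_0\}$, with $h$ the largest divisor of $g_0$ coprime to $q$. The letters appearing at positions $\equiv 0 \pmod h$ in the fixed point $U$ carry the structure that the pure base records. Concretely, I would take $u = u_0$, the single initial letter of $U$, and examine the return words $\mathcal{R}_{U,u_0}$: each return word is a block of $U$ beginning at one occurrence of $u_0$ and running up to (but not including) the next occurrence. Because of the height, these return-word lengths are constrained, and the induced substitution $\Theta_{U,u_0}$ on $\mathcal{R}_{U,u_0}$ is a constant-length substitution (of length $q$ on the return words, since $\zeta$ has constant length $q$ and maps the gap structure compatibly).

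The main step, then, is to identify $\Theta_{U,u_0}$ (or a conjugate/power of it) with the pure base substitution $\eta$, so that $S_{\Theta_{U,u_0}}$ and $S_\eta$ have the same nonzero, non-root-of-unity spectrum — ideally they are equal, or differ only by a permutation conjugacy that does not alter the spectrum. I expect this to follow from comparing Dekking's explicit pure-base construction with the return-word construction: both amalgamate the original alphabet according to the equivalence classes determined by the height, and the return words on $u_0$ are in natural bijection with the pure-base letters (the ``blocks between consecutive coincidences of the marked symbol''). Once this identification is in place, Durand's \cite[Proposition 9]{Du} gives that $S_\zeta$ and $S_{\Theta_{U,u_0}}$ share their spectra except possibly for $0$ and roots of unity, and hence so do $S_\zeta$ and $S_\eta$.

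The hard part will be the precise matching between the return-word substitution and Dekking's pure base: verifying that the induced alphabet sizes agree and that the induced substitution is genuinely conjugate to $\eta$ rather than merely spectrally comparable. If a clean identification proves awkward, the fallback is to apply Durand's result twice — once to relate $S_\zeta$ to $S_{\Theta_{U,u_0}}$, and once to relate $S_\eta$ to its \emph{own} return-word substitution on the same marked letter — and then argue that the two return-word substitutions coincide because passing to the pure base does not change the return-word structure on $u_0$ (the pure base is, by construction, already ``return-word reduced'' at the marked symbol, so its return-word substitution is itself, modulo the trivial $0$/root-of-unity ambiguity). Either route yields that the spectra of $S_\eta$ and $S_\zeta$ differ only by $0$ and roots of unity, which is the assertion; combined with Corollary~\ref{cor-half} and the argument of Section~\ref{dimension}, this justifies the reduction to height $1$ used throughout.
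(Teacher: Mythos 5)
Your overall strategy---Durand's return-word theorem applied on both levels, plus an identification of the two return-word substitutions---is exactly the strategy of the paper's proof, but the specific identification you propose is false, and this is precisely where the real work lies. The problem is your choice of marked word: you take return words of $U$ on the single letter $u_0$. All occurrences of $u_0$ in $U$ do lie at positions divisible by $h$ (this is the definition of height), but $u_0$ may be the first symbol of \emph{several} distinct pure-base letters, i.e.\ of several length-$h$ blocks $\phi(c)$, $c\in{\mathcal I}$. Consequently the occurrences of $u_0$ in $U$ correspond to occurrences in $V$ of a whole \emph{set} of letters of ${\mathcal I}$, not of the single letter $v_0$, and neither of your claimed identifications holds: $\Theta_{U,u_0}$ is not conjugate to $\eta$, and $\Theta_{U,u_0}$ does not coincide with the return-word substitution of $\eta$ on $v_0$. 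The paper's own Example~\ref{addeigen} is a concrete counterexample: there $h=2$, and $u_0=1$ begins the two blocks $14=\phi(a)$ and $15=\phi(f)$. The word $142534$ (positions $4$--$9$ of $U$) is a return word of $U$ on $1$, but it is not the $\phi$-image of any return word of $V$ on $a$, because the occurrence of $1$ at position $10$ starts the block $15$, not $14$. Your parenthetical claim that $\Theta_{U,u_0}$ is again of constant length $q$ also fails in this example: $\zeta(1425)$ decomposes into $3$ return words, while $\zeta(142534)$ decomposes into $5$; in particular $\Theta_{U,u_0}$ cannot be identified with the constant-length substitution $\eta$.

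The fix---which is what the paper does---is to mark in $U$ the length-$h$ prefix $\phi(v_0)=u_0u_1\cdots u_{h-1}$ rather than the letter $u_0$, and to mark $v_0$ in $V$. Then the height enters exactly as you intended, but one level up: any occurrence of the \emph{word} $\phi(v_0)$ in $U$ begins with $u_0$, hence starts at a position $hk$, hence is aligned with the block decomposition $U=\phi(v_0)\phi(v_1)\phi(v_2)\cdots$; by injectivity of $\phi$ on ${\mathcal I}$ it therefore corresponds to an occurrence of the letter $v_0$ in $V$. This makes $\phi$ a bijection from ${\mathcal R}_{V,v_0}$ onto ${\mathcal R}_{U,\phi(v_0)}$ intertwining the induced substitutions (using $\phi\circ\eta=\zeta\circ\phi$), so $\Theta_{U,\phi(v_0)}$ and $\Theta_{V,v_0}$ coincide up to renaming of letters. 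Durand's result \cite[Proposition 9]{Du}, applied once to the pair $(\zeta,\Theta_{U,\phi(v_0)})$ and once to $(\eta,\Theta_{V,v_0})$---note that it allows an arbitrary prefix of the fixed point, not only a one-letter prefix---then yields the assertion. So your fallback is structurally the right proof, but it becomes correct only after replacing the marked letter $u_0$ by the $h$-letter prefix $\phi(v_0)$ and actually proving the alignment property above; as stated, the claimed coincidence of return-word structures is the missing (and false) step.
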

\begin{proof}
We prove that $\zeta$ and $\eta$ have prefixes, such that the induced substitutions
on the sets of return words coincide up to a permutation of the alphabets. Denote by $U = u_0u_1u_2\ldots$ the fixed point of substitution $\zeta$. 
By \cite[Theorem 14, Lemma 17]{Dek1}, the alphabet $\mathcal I$ for the pure base
substitution $\eta$ consists of the different blocks of symbols from $U$ of length $h$, that start
at places $hk$, $k\in\mathbb N\cup\{0\}$. Therefore there exists a 1-to-1 map $\phi\colon{\mathcal I}
\to{\mathcal A^h}$ extendable by concatenation to an arbitrary sequence of letters from ${\mathcal I}$
such, that if $V=v_0 v_1 v_2 \ldots$ is the fixed point for substitution $\eta$,
then $U=\phi(v_0)\phi(v_1)\phi(v_2)\dots$ and $\phi\circ\eta=\zeta\circ\phi$.

Consider the set of return words ${\mathcal R}_{V,v_0}$ with the induced substitution 
$\Theta_{V,v_0}\colon {\mathcal R}_{V,v_0}\to {\mathcal R}_{V,v_0}^+$ and the set of return words
${\mathcal R}_{U,\phi(v_0)}$ with the induced substitution 
$\Theta_{U,\phi(v_0)}\colon {\mathcal R}_{U,\phi(v_0)}\to {\mathcal R}_{U,\phi(v_0)}^+$. Note that there
exists a 1-to-1 correspondence $\psi\colon {\mathcal R}_{V,v_0}\to {\mathcal R}_{U,\phi(v_0)}$, 
such that for all $w\in {\mathcal R}_{V,v_0}$ we have $\psi(w)=\phi(w)$. Suppose that the return word $r$
is located in the sequence $V$ between positions $i$ and $j$, i.e.
$r=v_i v_{i+1}\ldots v_{j-1}$, $v_i=v_0$, $v_j=v_0$, and $v_l\ne v_0$ for all $i<l<j$.
In the sequence $\phi(r)=\phi(v_i)\phi(v_{i+1})\ldots\phi(v_{j-1})$ the word $\phi(v_0)$
appears only once, because the word $\phi(v_0)$ starts with symbol $u_0$, which can only be located
in places numbered $hk$, $k\in{\mathbb N}\cup \{0\}$ (see the definition of height and \eqref{height}),
and therefore its appearance for the second time must coincide with one of $\phi(v_l)$, $i<l<j$, which
is impossible, hence $\phi(r)$ is a return word in ${\mathcal R}_{U,\phi(v_0)}$.

Since $\Theta_{V,v_0}$ is induced by $\eta$, if $r\in {\mathcal R}_{V,v_0}$ we have 
$\Theta_{V,v_0}(r)=r_1r_2\ldots=\eta(r)$ for some $r_1,r_2,\ldots\in {\mathcal R}_{V,v_0}$,
and $\phi\circ\eta(r)=\zeta\circ\phi(r)$. Hence $\Theta_{U,\phi(v_0)}(\psi(r))=\zeta(\phi(r))=\phi(\eta(r))=\phi(r_1)\phi(r_2)\ldots=\psi(r_1)\psi(r_2)\ldots$, therefore
$\psi$ is a bijection and the substitutions on return words coincide up to a renumbering of the alphabet.
\end{proof}
In example~\ref{addeigen} in Section~\ref{examples} we show that a change can occur in the eigenvalues
of substitution matrix according to our last proposition.

\medskip

In order to complete the reduction to the pure base in our Theorem~\ref{th-main1}, it remains to observe that by \cite{Dek1} (see also \cite[6.3.1.2]{Queff}), the initial system $(X_\zeta,T,\mu)$ is metrically isomorphic to the tower  of constant height $h$ over the pure base substitution system $(X_\eta,T',\mu')$, hence the maximal spectral type of the former is obtained by the convolution of the spectral type of the latter with the Haar measure on $\Z/h\Z$.


\section{Dimension of spectral measure at zero}\label{dimension}
In order to compute the dimension of spectral measure at zero, we extend some of the results from the joint work of A. I. Bufetov and the second author \cite[Section 6]{BuSo1} developed for self-similar suspension  flows over substitution systems (not necessarily of constant length). We recall this more general setting.

\subsection{General case}
Suppose that $\zeta$ is a primitive aperiodic substitution on an alphabet $\A$ of size $m\ge 2$, with a substitution matrix $\Sf=\Sf_\zeta$. We emphasize that in this subsection we consider  both constant-length and non-constant length substitutions.
Let
\[
\theta_1> |\theta_2|\ge\cdots \ge|\theta_m|
\]
be the eigenvalues of $\Sf$, counted with algebraic multiplicities,
in the order of descent by their absolute value.
Let $s$ be the Perron-Frobenius (PF) eigenvector for the transpose $\Sf^t$, normalized by
$$
\sum_{a\in \A} s_a \mu([a])=1.
$$

The self-similar suspension flow over the uniquely ergodic substitution measure-preserving $\Z$-action $(X_\zeta,T_\zeta,\mu)$ is defined as follows:
$$\Xxi = \{x=(y,t):\ y\in X_\zeta,\ 0 \le t \le s_{y_0}\}/_\sim\ ,$$
 where  the relation $\sim$ identifies the points $(y, s_{y_0})$ and $(T_\zeta (y), 0)$. The measure $\wt{\mu}$ on $\Xxi$ is induced from the product  of $\mu$ on the cylinder sets $[a]\subset X_\zeta$ and the Lebesgue measure; it is invariant under the action $h_\tau(y,t) = (y, t+\tau)$, which is defined for all $\tau\in \R$ using the identification $\sim$. Our normalization ensures that $\wt{\mu}$ is a probability measure, and
we thus obtain a measure-preserving $\R$-action $(\Xxi, h_\tau,\wt{\mu})$. The term ``self-similar'' comes from another representation of this flow, as a tiling dynamical system on the line, with $(y,\tau)$ corresponding to the tiling 
of $\R$ by closed interval tiles of lengths $u_{1,y_n}$, $n\in \Z$, in such a way that the origin  is located at the point $t$ in the tile labelled by $y_0$, see \cite{SolTil,BuSo1} for details.
The substitution action $Z$ on $\Xxi$ is then defined by: scale the entire tiling by $\theta_1$ and subdivide the resulting tiles according to the substitution rule. These actions satisfy the relation
$$
Z\circ h_\tau = h_{\theta_1 \tau} \circ Z.
$$

Denote by $P_j$ the projection of $\C^m$ onto the span of generalized eigenvectors of $\Sf^t$, corresponding to $\theta_j$, commuting with $\Sf^t$.
The following notation will be useful: for $v\in \C^m$ let
\be \label{def-j}
j(v) = \min\{i:\ P_i(v)\ne 0\},
\ee
\be \label{def-pr}
pr(v) = \sum_{k: |\theta_k|=|\theta_{j(v)}|} P_k v, 
\ee
and let $\kappa(v)$ be the ``largest elimination exponent'' of $v$, defined by
\be \label{def-kappa}
\kappa(v) = \max\{i:\ (\Sf^t - \theta_k I)^{i-1} P_k(v)\ne 0:\ |\theta_k|=|\theta_{j(v)}|\}.
\ee

Denote by $E^+$ be the linear span of generalized eigenvectors for $\Sf^t$ corresponding to eigenvalues greater than one in absolute value.
Let $\{\Phi^+_{v,x}:\ v\in E^+, \ x\in \Xxi\}$ be the H\"{o}lder cocycle (family of finitely-additive measures), 
defined on the algebra generated by line segments in $\R$
(see \cite{B13,BS13,BuSo1}). We refer to these papers for the definition of this cocycle, but collect their  properties that we need in the lemma below. For simplicity, we write $\Phi^+_{v,x}(t) = \Phi^+_{v,x}([0,t])$.

\begin{lemma} \label{lem-cocycle}
\renewcommand{\theenumi}{\roman{enumi}}
Under the standing assumptions, we have:
\begin{enumerate}
\item Linearity: $v\mapsto \Phi^+_{v,x}(t)$ is a linear function on $E^+$;

\item Cocycle property: $\Phi^+_{v,x} (s+t) = \Phi^+_{v,x} (s) + \Phi^+_{v,h_s(x)} (t)$ for all $v\in E^+$, $x\in \Xxi$, $s, t \in \R$;

\item Renormalization: $\Phi^+_{v,Z(x)} (\theta t) = \Phi^+_{\Sf^t v,x}(t)$;

\item Upper bound: for $v\in E^+$ we have
\[
|\Phi^+_{v,x}(t)|\le C_1 \|v\|\cdot |t|^\alpha (\log|t|)^{\kappa(v)-1},\ \ \mbox{for}\ t\ge 2,\ \ \mbox{where}\ \ \alpha = \frac{\log|\theta_{j(v)}|}{\log\theta}\,,
\]
and $C_1$ depends only on the substitution $\zeta$.

\item Continuity: for a given $v\in E^+\setminus \{0\}$, the function $(t,x)\mapsto \Phi^+_{v,x}(t)$ is continuous on $\R_+\times \Xxi$;

\item Non-degeneracy: for a given $x\in \Xxi$ and $v\in E^+\setminus \{0\}$, the function $t\mapsto \Phi^+_{v,x}(t)$ is  not constant zero on any interval.
\end{enumerate}
\end{lemma}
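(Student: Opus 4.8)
The plan is to start from the explicit construction of the finitely-additive cocycle given in \cite[Section~6]{BuSo1} (see also \cite{B13,BS13}) and to verify the six listed properties from it, the genuinely new content being the upper bound (iv) in the presence of arbitrary Jordan structure and several eigenvalues of equal modulus. First I would recall that $\Phi^+_{v,x}$ is built by first defining it on segments $[a,b]\subset\R$ whose endpoints are tile boundaries of the tiling coded by $x$: one forms the integer vector $\vec{\psi}_x(a,b)\in\Z^m$ counting the tiles of each type contained in $[a,b]$ and sets $\Phi^+_{v,x}([a,b])=\langle v,\vec{\psi}_x(a,b)\rangle$ for $v\in E^+$, the extension to arbitrary segments being then forced by additivity together with the H\"older estimate (iv). With this description, properties (i)--(iii) are essentially immediate. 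Linearity (i) holds because $v\mapsto\langle v,\vec{\psi}_x(a,b)\rangle$ is linear. The cocycle relation (ii) is the additivity $\Phi^+_{v,x}([0,s+t])=\Phi^+_{v,x}([0,s])+\Phi^+_{v,x}([s,s+t])$ combined with the fact that the tiling seen from $h_s(x)$ is the $s$-translate of the tiling seen from $x$. The renormalization (iii) encodes self-similarity: applying $Z$ scales the tiling by $\theta$ and subdivides each tile according to $\zeta$, so that $\vec{\psi}_{Z(x)}(0,\theta t)=\Sf\,\vec{\psi}_x(0,t)$, and pairing with $v$ moves $\Sf$ onto $v$ as $\Sf^t v$, giving $\Phi^+_{v,Z(x)}(\theta t)=\langle v,\Sf\vec{\psi}_x(0,t)\rangle=\langle\Sf^t v,\vec{\psi}_x(0,t)\rangle=\Phi^+_{\Sf^t v,x}(t)$.

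The heart of the matter is the upper bound (iv), and this is where I expect the main obstacle to lie. The plan is to iterate the renormalization relation. Given large $T$, write $T=\theta^n t_0$ with $t_0\in[1,\theta)$ and $n\asymp\log T/\log\theta$; using recognizability (invertibility of $Z$) choose $x'$ with $x=Z^n(x')$, so that iterating (iii) gives $\Phi^+_{v,x}(T)=\Phi^+_{(\Sf^t)^n v,\,x'}(t_0)$. Since $t_0$ is of unit order, the right-hand side is a cocycle value over a segment meeting only a uniformly bounded number of tiles, whence $|\Phi^+_{v,x}(T)|\le C\,\|(\Sf^t)^n v\|$ uniformly in $x$. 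The problem thus reduces to the purely linear-algebraic growth of $\|(\Sf^t)^n v\|$. Because $j(v)$ is minimal with $P_{j(v)}v\ne0$, the vector $v$ has no component in eigenspaces of modulus larger than $|\theta_{j(v)}|$, so the dominant growth comes from the top-modulus part $pr(v)$; by the Jordan normal form this norm grows like $|\theta_{j(v)}|^n\,n^{\kappa(v)-1}$, where $\kappa(v)$ from \eqref{def-kappa} is exactly the largest relevant nilpotent degree. Substituting $n\asymp\log T/\log\theta$ turns $|\theta_{j(v)}|^n$ into $T^\alpha$ with $\alpha=\log|\theta_{j(v)}|/\log\theta$ and $n^{\kappa(v)-1}$ into $(\log T)^{\kappa(v)-1}$, yielding the claimed bound. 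The delicate point, and the extension beyond \cite{BuSo1} (where $\theta_2$ was assumed positive, real, and simple), is to handle simultaneously several eigenvalues of equal modulus and nontrivial Jordan blocks; the device that makes this work is precisely to single out $pr(v)$ for the growth rate and to track the nilpotent height through $\kappa(v)$, after which the estimate is uniform in $x$ since the base segments have uniformly bounded tile counts.

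Finally, properties (v) and (vi) follow with less effort. For continuity (v), the bound (iv) applied to a small increment $[t,t+\delta]$ shows that $t\mapsto\Phi^+_{v,x}(t)$ is locally H\"older, uniformly in $x$, while continuity in $x$ comes from $\vec{\psi}$ being locally constant on cylinder sets and from the continuity of $x\mapsto$ coded tiling in the natural tiling metric; combining the two gives joint continuity on $\R_+\times\Xxi$. For non-degeneracy (vi) I would argue by contradiction: if $\Phi^+_{v,x}(\cdot)$ vanished on some interval, then by the cocycle property (ii) together with minimality of $(\Xxi,h_\tau)$ the increments would vanish along a dense set of positions, forcing $\langle v,\vec{\psi}\rangle\equiv0$ on all tile-aligned segments; but unique ergodicity guarantees that every tile type occurs with positive frequency, so the tile-counting vectors span $\C^m$, whence $v=0$, contradicting $v\in E^+\setminus\{0\}$. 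This completes the verification of all six properties.
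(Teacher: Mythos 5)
Your reconstruction of the cocycle and your verification of (i)--(iv) are essentially sound and agree with what the paper imports by citation: (i)--(iii) do follow from the tile-counting description on tile-aligned segments, and (iv) follows by iterating the renormalization down to unit scale and bounding $\|(\Sf^t)^n v\|$ through the Jordan structure --- this is exactly the content of \cite[Lemma 3.3]{BS13}, which the paper simply cites (so this part is not ``new'' here, contrary to your framing; the only genuinely new ingredient the paper adds is extending \cite[Lemma 3.5]{BS13} from eigenvectors to general $v\in E^+$, which is needed for (v)).

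There is, however, a genuine gap in your proof of (vi), and it traces back to an oversimplification in your description of the construction. You define $\Phi^+_{v,x}$ only via tile counts $\vec{\psi}_x(a,b)$ on tile-aligned segments and assert that the extension to arbitrary segments is ``forced by additivity together with the H\"older estimate''; in fact the extension is made through the self-similar subdivision of individual tiles into ``subtiles'' of depth $n$, whose increments are components of $(\Sf^t)^{-n}v$, and this sub-tile structure is precisely what (v) and (vi) rest on. Your argument for (vi) --- vanishing on an interval $I$, plus the cocycle property and minimality, propagates to ``a dense set of positions,'' forcing $\langle v,\vec{\psi}\rangle\equiv 0$ on all tile-aligned segments --- does not work. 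The set of tilings $y$ with $\Phi^+_{v,y}\equiv 0$ on $[0,\delta]$ is closed (by (v)) but not flow-invariant, so minimality gives nothing; and what can legitimately be propagated (vanishing on the syndetic set of translated copies of the single bounded pattern $x|_I$, using locality of the cocycle and repetitivity) says nothing about the increments in the gaps between those copies, so it does not yield vanishing on all tile-aligned segments, let alone $v=0$. Note also that your final spanning step invokes unique ergodicity where none is needed, while the real obstruction is earlier. The paper's argument is local and goes \emph{downward} in scale instead: any interval, however short, contains subtiles of every letter type at sufficiently large depth $n$ (by primitivity), and their increments are the corresponding entries of $(\Sf^t)^{-n}v$, which cannot all vanish since $\Sf^t$ is invertible on $E^+$; combined with (v) this gives (vi). A related imprecision occurs in your (v): the bound (iv) is stated only for $t\ge 2$ and cannot be ``applied to a small increment $[t,t+\delta]$''; local H\"older continuity in $t$ requires the small-scale estimate obtained by downward renormalization (\cite[Eqs.\ (26), (29) and Lemma 3.5]{BS13}, extended from eigenvectors to all of $E^+$), which is exactly the point the paper takes care of.
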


\begin{proof}
Although all these statements are essentially contained in \cite{B13}, the setting there is somewhat different (Vershik automorphisms on Bratteli diagrams), so we prefer to give a more direct derivation based on \cite{BuSo1}. First we note that in \cite{BuSo1} the finitely-additive measures are defined on a class of subsets  of $\R^d$, they correspond to self-similar tilings of $\R^d$ and vectors $v$ in a subspace $E^{++}\subset E^+$; however, in the case $d=1$ needed here we have
$E^{++}=E^+$. 

Linearity (i) is immediate from the definition \cite[Eq.(9)]{BS13} and (ii) is proved in \cite[Lemma 3.2]{BS13}. The cocycle property is verified just before Lemma 6.1 in \cite{BuSo1}. The estimate (iv) is contained in \cite[Lemma 3.3]{BS13}.

(v) In \cite[Lemma 3.5]{BS13} it was assumed that $v$ is an eigenvector for $\Sf^t$, but it easily extends to the general case of $v\in E^+$ (for $d=1$) with $|\theta|$ in that lemma replaced by any $|\theta|-\eps>1$,
with $|\theta|$ the minimal absolute value of an eigenvalue greater than 1, and allowing the constant $C$ in \cite[Eq.(29)]{BS13} to depend on $r_1< r_2$ as well. 
Continuity of $t\mapsto \Phi^+_{v,x}(t)$  then immediately follows from the modified H\"older estimate \cite[Eq.(26)]{BS13}. Continuity in the $x$ variable along the orbit follows from (ii). On the ``transversal'' consisting of all tilings which agree with a given $x$ on a tile containing the origin, $x'$ being ``close'' in the tiling metric means perfect agreement on a neighborhood $B_R(0)$ for a large $R$, in which case $\Phi^+_{v,x}(t) = \Phi^+_{v',x'}$ for all
$t\in (0,R)$.

(vi) This follows from (v) and from the fact that $\Phi^+_{v,x}([t_1,t_2])$ is non-zero on the line segments that are ``tiles'' of $x$, as well as on their ``subtiles'' obtained by sibdividing the tiles according to the substitution rule as many times as we wish; thus for a dense set of pairs $t_1< t_2$.
\end{proof}

Estimates of the dimension of spectral measures at zero will be derived from the asymptotics of ergodic (Birkhoff) integrals, which we state next.
Denote 
\be
S(F,x, t) = \int_0^t F\circ h_\tau(x)\,d\tau,\ \ \mbox{for}\ \  F\in \Xxi.
\ee
As test functions, we consider cylindrical functions on $\Xxi$ defined by $F=\sum\limits_{a\in \A} F_a \One_{\wt a}$, where $\One_{\wt a}=\One_{[{\wt a}]}$ and $[{\wt a}] = \{(y,t)\in \Xxi:\ y_0 = a\}$ is the cylinder set for the suspension flow (note that this definition is slightly less general than in \cite{BuSo1}; we restrict ourselves to such functions for simplicity of exposition, since this is all that we need for our application).
A cylindrical function $F$ is uniquely determined by the vector $\vec{F}:=(F_a)_{a\in \A} \in \C^m$. 
The following estimate is a special case of \cite[Theorem 4.3]{BS13}. 

\begin{corollary}[\cite{BS13}] \label{cor-BS} Let $F=\sum\limits_{a\in \A} F_a \One_{\wt a}$, with $\|\vec{F}\|=1$, and let $j=j(\vec{F})\ge 1$ be minimal such that $P_j \vec{F}\ne 0$. 
\renewcommand{\theenumi}{\roman{enumi}}
\begin{enumerate}
\item Suppose that $|\theta_j|>1$. Then for any $\eps>0$ there exists $C_\eps>0$ such that
\be \label{est1}
S(F,x,t) = \Phi^+_{pr(\vec{F}),x}(t)+ \Rk(t),\ \ \ |\Rk(t) | \le C_\eps \max\{1, |t|^{\alpha-\eps}\},
\ee
where 
$pr(\vec{F})$ is defined in (\ref{def-pr}) and $\alpha = \frac{\log|\theta_j|}{\log|\theta_1|}$.

\item If $|\theta_j|\le 1$, then
\be \label{est2}
|S(F,x,t)| \le C \max\{1, \bigl|\log |t|\bigr|^k\},
\ee
for some $k\ge 0$.
\end{enumerate}
\end{corollary}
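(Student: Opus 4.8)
The plan is to prove both estimates by comparing the Birkhoff integral $S(F,x,t)$ with the H\"older cocycle $\Phi^+$ whose properties are collected in Lemma~\ref{lem-cocycle}, exploiting the self-similar (renormalization) structure of the suspension flow; this specializes \cite[Theorem 4.3]{BS13}, but I would give a self-contained derivation from the cocycle lemma. First I would record the elementary observation that, because $F$ is cylindrical, its integral over a single tile is a fixed linear functional of the tile's type, so its integral over a level-$n$ supertile of type $a$ is read off from the $a$-th column of $(\Sf^t)^n$ applied to the data of $F$. Consequently the growth of $S$ along the substitution hierarchy is governed by the action of $\Sf^t$, and in particular by the top-modulus generalized-eigenspace component of $\vec F$, that is, by $pr(\vec F)$ of (\ref{def-pr}) together with the elimination exponent $\kappa$ of (\ref{def-kappa}).

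Second, I would decompose the orbit segment $[0,t]$ according to the hierarchical subdivision of the tiling $x$: a bounded number of complete level-$n$ supertiles for $n=0,1,\ldots,\lfloor\log_{\theta_1} t\rfloor$, together with boundary (incomplete) pieces. By the cocycle property (ii) of Lemma~\ref{lem-cocycle} both $S$ and $\Phi^+$ are additive over this decomposition, and by the renormalization property (iii) the cocycle contribution of a level-$n$ supertile equals $\Phi^+_{(\Sf^t)^n v,\,\cdot}$ evaluated on the base tile. The cocycle $\Phi^+_{pr(\vec F),x}(t)$ is precisely the renormalization-equivariant object that reproduces the leading contribution of the top-modulus component $pr(\vec F)$; hence the difference $S(F,x,t)-\Phi^+_{pr(\vec F),x}(t)$ collects only (a) the contributions of the spectral components of $\vec F$ of \emph{strictly} smaller modulus, and (b) the bounded per-tile and boundary discrepancies between the exact integral and the cocycle.

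Third, I would estimate the remainder $\Rk(t)$ by summing these contributions across the $\asymp\log_{\theta_1} t$ levels. A component along an eigenvalue $\theta_k$ with $|\theta_k|<|\theta_j|$ contributes, at level $n$, a term of size $\lesssim|\theta_k|^n n^{\const}$; multiplied by the per-tile bound of Lemma~\ref{lem-cocycle}(iv) and summed geometrically, this yields a total of order $|t|^{\alpha'}$ with $\alpha'=\log|\theta_k|/\log\theta_1<\alpha$, while the per-tile and boundary discrepancies contribute at most polylogarithmically in $t$. Both are subsumed in the bound $C_\eps\max\{1,|t|^{\alpha-\eps}\}$ of (\ref{est1}), proving (i). For (ii), when $|\theta_j|\le 1$ the vector $\vec F$ has no component in $E^+$, so no cocycle term is present, and the same level-by-level summation leaves only eigenvalues of modulus $\le 1$: those of modulus strictly less than one sum to a bounded quantity, while a unit-modulus part grows at most polynomially in the number of levels, giving the bound $C\max\{1,|\log|t||^k\}$ of (\ref{est2}), with $k$ governed by the largest Jordan block on the unit circle.

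The main obstacle I expect is the careful bookkeeping of the hierarchical decomposition: controlling the boundary (incomplete) supertiles at each level and handling eigenvalues of equal modulus carrying nontrivial Jordan structure, which is exactly what forces the logarithmic factors (the exponent $\kappa(v)$) and the loss of $\eps$ in (\ref{est1}). A second delicate point is verifying that $\Phi^+_{pr(\vec F),x}(t)$ captures the leading term \emph{on the nose}, and not merely up to a multiplicative constant; this requires invoking the defining linearity, cocycle, and renormalization properties (i)--(iii) of Lemma~\ref{lem-cocycle} rather than the upper bound (iv) alone.
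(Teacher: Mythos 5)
Your outline is, in substance, a re-proof of the very result the paper cites, so let me first record what the paper actually does. Its proof of Corollary~\ref{cor-BS} is one paragraph of linear algebra hung on a citation: choose a basis $\{u_k\}$ of generalized eigenvectors of $\Sf$ with dual basis $\{v_k\}$ (the $v_k$ are then generalized eigenvectors of $\Sf^t$), quote \cite[Theorem 4.3]{BS13} (with $d=1$, $\Omega=[0,1]$) to write
\[
S(F,x,t)=\sum_{k:\,|\theta_k|>1}\langle\vec{F},u_k\rangle\,\Phi^+_{v_k,x}(t)+O\bigl(\max\{1,\bigl|\log|t|\bigr|^{c}\}\bigr),
\]
note that all terms with $|\theta_k|>|\theta_j|$ vanish by minimality of $j$ (which gives part (ii) immediately when $|\theta_j|\le 1$), absorb the terms with $1<|\theta_k|<|\theta_j|$ into $\Rk(t)$ using the upper bound of Lemma~\ref{lem-cocycle}(iv) (that is, \cite[Lemma 3.3]{BS13}), and recombine the top-modulus terms into $\Phi^+_{pr(\vec F),x}(t)$ by linearity, since $pr(\vec F)=\sum_{k:\,|\theta_k|=|\theta_j|}\langle\vec F,u_k\rangle v_k$. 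Your hierarchical supertile decomposition is essentially how the cited theorem itself is proved, so as a route it is legitimate and more self-contained, but it does the work that the corollary deliberately outsources; your handling of the intermediate eigenvalues and of part (ii) then agrees with the paper's.

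The genuine gap is in your final paragraph, where you claim the leading term is identified ``on the nose'' using properties (i)--(iii) of Lemma~\ref{lem-cocycle}. It is not, and provably cannot be: the modified family $\Psi_{v,x}:=\Phi^+_{Av,x}$, for any invertible $A$ commuting with $\Sf^t$ (already $A=2I$), satisfies every item of the lemma verbatim (with $C_1$ replaced by $\|A\|C_1$ in (iv)), yet produces a ``leading term'' differing by the factor $A$; if the lemma alone implied the corollary, applying it to both families would force $\Phi^+_{pr(\vec F),x}(t)=O(|t|^{\alpha-\eps})$, contradicting the lower bounds established later in Proposition~\ref{prop-dimzero}. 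What pins the coefficient down is a normalization that appears nowhere in Lemma~\ref{lem-cocycle}: by the construction in \cite{BS13}, $\Phi^+_{v,x}$ of a complete level-$n$ supertile of type $a$ equals $((\Sf^t)^n v)_a$, and this must be matched against the exact Birkhoff integral of $F$ over that supertile. Doing that matching also exposes a bookkeeping point your sketch glosses over: the integral of $F$ over a single tile of type $a$ is $F_a s_a$, not $F_a$, so the vector whose $\Sf^t$-spectral decomposition governs the growth is $(F_as_a)_{a\in\A}$, and this has to be reconciled with the hypothesis $P_j\vec F\ne 0$ and the pairing $\langle\vec F,u_k\rangle$ (in the constant-length application $s=\One$ and the issue disappears, but the corollary is stated for general self-similar suspensions). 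Both points are supplied exactly by the statement and construction of \cite[Theorem 4.3]{BS13}; so your plan can be completed, but only by importing that citation's content, not from the properties listed in Lemma~\ref{lem-cocycle} alone.
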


\begin{proof}
Choose a basis $\{u_k\}_{k=1}^m$ for $\C^m$ consisting of generalized eigenvectors of $\Sf$ and the dual basis $\{v_k\}_{k=1}^m$, so that $u_k$ corresponds to the eigenvalue $\theta_k$.
Applying \cite[Theorem 4.3]{BS13} in the case $d=1$, with $\Omega=[0,1]$, yields
$$
S(F,x,t) = \sum_{k:\ |\theta_k|>1} \Phi_{v_k,x}^+(t) \cdot \langle \vec{F},u_k\rangle +  C \max\{1, \bigl|\log |t|\bigr|^k\}.
$$
If $|\theta_j|\le 1$, then  $\langle \vec{F},u_k\rangle =0$ for  all $k$ such that $|\theta_k|>1$, and (\ref{est2}) follows. If $|\theta_j|>1$, then, 
taking \cite[Lemma 3.3]{BS13} into account and the fact that $\langle \vec{F},u_k\rangle =0$ for $|\theta_k|>|\theta_j|$ by the definition of $j$, we obtain
$$
S(F,x,t) = \sum_{k:\ |\theta_k|= |\theta_j|} \Phi_{v_k,x}^+(t) \cdot \langle \vec{F},u_k\rangle + O(|t|^{\alpha-\eps}),\ \ |t|\to\infty.
$$
It remains to note that 
$$
pr(\vec{F}) = \sum_{k:\ |\theta_k|=|\theta_j|} \langle \vec{F},u_k\rangle v_k,
$$
so the desired estimate (\ref{est1}) follows from the linearity of $v\mapsto \Phi_{v,x}^+(t)$.
\end{proof}

\begin{prop}\label{prop-dimzero}
Consider a cylindrical function $F=\sum\limits_{\alpha\in \A}F_\alpha \1_\alpha$, with $\|\vec{F}\|=1$ and let $\sig_F$ be the spectral measure for the measure-preserving flow $(\Xxi,h_\tau,\wt{\mu})$ corresponding to $F$. Let $j=j(\vec{F})$.
\renewcommand{\theenumi}{\roman{enumi}}
\begin{enumerate}
\item Suppose that
the eigenvalue $\theta=\theta_j$ satisfies
$1< |\theta| < \theta_1$. Let $\kappa=\kappa(\vec{F})$ be the ``largest elimination exponent'' defined in (\ref{def-kappa}). Then
\be \label{bounds}
0 < \liminf_{r\to 0}\frac{\sigma_F(B_r(0))}{|\log r|^{\kappa-1} \cdot r^{2-2\alpha}}\le  \limsup_{r\to 0}\frac{\sigma_F(B_r(0))}{|\log r|^{\kappa-1} \cdot r^{2-2\alpha}} < \infty,
\ee
where $\alpha=\log|\theta|/\log \theta_1$; thus $d(\sig_F,0) = 2-2\alpha$.

\item If $|\theta_j|\le 1$, then for some $k\ge 0$ holds
$$
\limsup_{r\to 0}\frac{\sigma_F(B_r(0))}{|\log r|^{k} \cdot r^{2}} < \infty.
$$
\end{enumerate}
\end{prop}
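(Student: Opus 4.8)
The plan is to extract the local behaviour of $\sigma_F$ at $0$ from the $L^2$-growth of the Birkhoff integrals $S(F,\cdot,t)$ provided by Corollary~\ref{cor-BS}, using the spectral (Fej\'er) identity
\[
\int_{\Xxi} |S(F,x,t)|^2\,d\wt{\mu}(x) \;=\; \int_{\R}\frac{\sin^2(\pi\omega t)}{(\pi\omega)^2}\,d\sigma_F(\omega).
\]
The kernel $K_t(\omega)=\sin^2(\pi\omega t)/(\pi\omega)^2$ is a nonnegative bump of height $\asymp t^2$ and width $\asymp 1/t$ with total mass $t$; it satisfies $K_t(\omega)\ge c\,t^2$ for $|\omega|\le 1/(2t)$ and $K_t(\omega)\le (\pi\omega)^{-2}$ everywhere. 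Setting $t=1/r$, the first inequality gives the cheap upper bound $\sigma_F(B_{r/2}(0))\le C r^2\int_{\Xxi}|S(F,x,1/r)|^2\,d\wt{\mu}$, so everything reduces to two-sided control of $\|S(F,\cdot,t)\|_{L^2}^2$ as $t\to\infty$. In particular, case (ii) follows at once: by Corollary~\ref{cor-BS}(ii) we have $|S(F,x,t)|\le C|\log t|^{k}$, whence $\int|S|^2\le C|\log t|^{2k}$ and the above inequality yields $\sigma_F(B_r(0))\le C r^2|\log r|^{2k}$, which is the claimed estimate after relabelling $k$; no lower bound is needed there.

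For case (i), first I would dispose of the remainder: by Corollary~\ref{cor-BS}(i), $S(F,x,t)=\Phi^+_{pr(\vec{F}),x}(t)+\Rk(t)$ with $\|\Rk(\cdot,t)\|_{L^2}\le C_\eps t^{\alpha-\eps}$, so up to a lower-order term the $L^2$-norm of $S$ equals that of the cocycle contribution $\Phi^+_{pr(\vec{F}),\cdot}(t)$. The core task is then the sharp two-sided asymptotics of $\int_{\Xxi}|\Phi^+_{v,x}(t)|^2\,d\wt{\mu}$ for $v=pr(\vec{F})$.

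I would obtain this from the renormalization relation Lemma~\ref{lem-cocycle}(iii), $\Phi^+_{v,Z(x)}(\theta_1 t)=\Phi^+_{\Sf^t v,x}(t)$, together with the $Z$-invariance of $\wt{\mu}$. Fixing a Jordan basis for $\Sf^t$ on the span of the eigenspaces of modulus $|\theta_j|$ and writing $G_{v,w}(t)=\int_{\Xxi}\Phi^+_{v,x}(t)\,\ov{\Phi^+_{w,x}(t)}\,d\wt{\mu}$, the relation becomes a closed linear recursion in the variable $s=\log_{\theta_1}t$ for the matrix $\bigl(G_{v,w}(\theta_1^{s})\bigr)$, whose linear part has spectral radius $|\theta_j|^2=\theta_1^{2\alpha}$. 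Solving it shows that, after dividing by $t^{2\alpha}$, the entries grow polynomially in $s$ with top degree governed by the largest Jordan block, i.e.\ by $\kappa=\kappa(\vec{F})$; cross terms coming from distinct eigenvalues of equal modulus (the complex case) oscillate and only affect the bounded, possibly $s$-periodic, leading coefficients. This produces the two-sided bound $0<\liminf\le\limsup<\infty$ for $t^{-2\alpha}(\log t)^{-2(\kappa-1)}\|\Phi^+_{pr(\vec{F}),\cdot}(t)\|_{L^2}^2$, the positivity of the liminf being exactly the point where the non-degeneracy Lemma~\ref{lem-cocycle}(vi), together with positive semi-definiteness of $G$, is used. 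Combined with the reduction of the first paragraph, this already proves the upper bound in (i) and fixes the power $r^{2-2\alpha}$ and the logarithmic correction of (\ref{bounds}); in particular it determines the local dimension $d(\sigma_F,0)=2-2\alpha$, which is insensitive to the logarithmic factor.

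The hard part will be the lower bound $\liminf_{r\to 0}\sigma_F(B_r(0))/\bigl(|\log r|^{\kappa-1}r^{2-2\alpha}\bigr)>0$. The Fej\'er identity only gives $\int|S|^2=\int K_t\,d\sigma_F$, and the tail $\int_{|\omega|>1/t}K_t\,d\sigma_F\le\int_{|\omega|>1/t}(\pi\omega)^{-2}\,d\sigma_F$ is, by the a priori upper bound just established and a dyadic summation, of the \emph{same} order $t^{2\alpha}(\log t)^{2(\kappa-1)}$ as the full integral; a naive subtraction to isolate $\sigma_F(B_{1/t}(0))$ therefore fails. To beat the tail I would run a Tauberian argument that exploits the a priori upper regularity of $\sigma_F$ near $0$ (from the upper bound) — either a telescoping estimate across dyadic scales or the use of a suitably localized positive-definite test kernel whose positive mass is confined to $|\omega|\lesssim 1/t$ — so that the sharp lower bound on $\int|S|^2$ transfers to a genuine lower bound on $\sigma_F(B_{C/t}(0))$. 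This Tauberian transfer is adapted from \cite[Section 6]{BuSo1}, where $\theta_j$ was assumed simple, real and of multiplicity one, so that no logarithms and no oscillations were present; carrying it through in the present generality, with Jordan blocks and complex eigenvalues of equal modulus, is where the real work lies.
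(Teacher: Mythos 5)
Your strategy is in essence the paper's own: pass from $\sig_F$ near $0$ to two-sided $L^2$ asymptotics of ergodic integrals via a spectral identity, replace $S(F,x,t)$ by the cocycle using Corollary~\ref{cor-BS}, extract the growth from the renormalization $\Phi^+_{v,Z(x)}(\theta_1 t)=\Phi^+_{\Sf^t v,x}(t)$ together with the Jordan structure of $\Sf^t$, invoke non-degeneracy for positivity, and transfer back to balls. (The paper uses Schwartz test functions and integration by parts rather than the Fej\'er kernel, and expands $(\Sf^t)^N pr(\vec{F})$ in a Jordan basis rather than setting up a Gram-matrix recursion; your part (ii) is essentially the paper's argument.) But part (i) of your proposal has two genuine gaps. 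The first is the treatment of the unimodular factors coming from distinct eigenvalues of equal modulus $|\theta_j|$. Saying that they ``oscillate and only affect the bounded, possibly $s$-periodic, leading coefficients'' does not yield $\liminf>0$: a bounded oscillating coefficient can degenerate along subsequences of scales (already $|1+c^N|$ with $|c|=1$ can vanish, or be arbitrarily small, for infinitely many $N$), and (\ref{bounds}) quantifies over all $r\to 0$. This is exactly what the paper's Lemma~\ref{lem-vspom2} is for: it produces a \emph{syndetic} sequence $N_j$ along which all phases $c_i^{N_j-\kappa+1}$ are simultaneously close to $1$, so that non-degeneracy (Lemma~\ref{lem-cocycle}(v),(vi)) need only be applied to the single aligned vector $pr'(\vec{F})=\sum_i pr'_i(\vec{F})$; syndeticity plus monotonicity of $r\mapsto\sig_F(B_r(0))$ then covers all scales. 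In your framework you would instead need positive definiteness of a limiting Gram form, e.g.\ $u\mapsto\int_{\Xxi}\int_1^2|\Phi^+_{u,x}(\tau)|^2\,d\tau\,d\wt{\mu}(x)$, on the span of the $pr'_i(\vec{F})$ (this can be proved from Lemma~\ref{lem-cocycle}(v),(vi) and compactness, combined with averaging over a multiplicative window of scales), but you neither state nor prove it, and it is the technical heart of the matter.

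The second gap is an inconsistency in exponents. Your (correct) computation gives $\|\Phi^+_{pr(\vec{F}),\cdot}(t)\|_{L^2}^2\asymp t^{2\alpha}(\log t)^{2(\kappa-1)}$, and since the head of $\int K_t\,d\sig_F=\|S(F,\cdot,t)\|_{L^2}^2$ is $\asymp t^2\sig_F(B_{1/t}(0))$, your method, carried through, gives $\sig_F(B_r(0))\asymp r^{2-2\alpha}|\log r|^{2(\kappa-1)}$. That is not (\ref{bounds}): for $\kappa>1$ your upper bound is strictly weaker than the claimed one with $|\log r|^{\kappa-1}$, yet in your final paragraph you switch to the exponent $\kappa-1$ without reconciling the two; as written, your estimates cannot prove the statement as printed. (In fairness, the paper's proof contains the matching slip: for the displayed equality (\ref{lim1}) to hold, the denominator must carry $\binom{N}{N-\kappa+1}^2$ rather than $\binom{N}{N-\kappa+1}$, so the paper's argument also really produces the exponent $2(\kappa-1)$; the conclusion $d(\sig_F,0)=2-2\alpha$, and hence the main theorem, is unaffected, but you should have flagged the discrepancy rather than claim (\ref{bounds}) verbatim.) Finally, your worry about the Tauberian step is overstated: cutting the tail at $|\omega|\ge R/t$ and using the a priori upper bound with a dyadic sum bounds it by $C R^{-2\alpha}t^{2\alpha}(\log t)^{2(\kappa-1)}$, and since $\alpha>0$ a large fixed $R$ makes this a small fraction of the main term; this standard argument, which the paper leaves implicit in ``Together, these yield (\ref{bounds})'', completes the transfer from integrals to balls.
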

\begin{proof} (ii) We start with the second part, which is the easy case. In fact, this is just a consequence of Corollary~\ref{cor-BS}(ii), combined with \cite[Lemma 4.3]{BuSo1}.

(ii)
Recall that the spectral measure $\sigma_F$ for the $\R$-action satisfies
\[
\int_\R e^{2\pi i \omega t}d\sigma_F(\omega)=\langle F\circ h_t,F\rangle.
\]
Fix a Schwartz function $\psi$, and write, using 
the Inverse Fourier Transform,
\[
\psi(T\omega)=T^{-1}\int_\R e^{2\pi i \omega t}\what\psi(t/T)\,dt,\ \ T>0.
\]
In view of the spectral isomorphism between $L^2({\mathbb R},\sigma_F)$ and a closed subspace
of $L^2(\Xxi,\wt{\mu})$, that carries multiplication by
$e^{2\pi i \omega t}$ into composition with $h_t$, we obtain
\be \label{eqa0}
\int_\R |\psi(T\omega)|^2\,d\sigma_F(\omega)=T^{-2}\int_{\Xxi}
\bigg\vert \int_\R F\circ h_t(x)\cdot \what{\psi}(t/T)\,dt\bigg\vert^2d\wt{\mu}(x).
\ee
Integrating by parts and using Corollary~\ref{cor-BS}, we obtain
\begin{eqnarray}
\int_\R F\circ h_t(x)\cdot \what{\psi}(t/T)\,dt & = &
-T^{-1}\int_\R S(F,x,t)\cdot(\what{\psi})'(t/T)\,dt \nonumber \\[1.2ex]
 & = & -T^{-1}\int_\R\bigl(\Phi^+_{pr(\vec{F}),x}(t)+{\mathcal R}(t)\bigr)\cdot(\what{\psi})'(t/T)\,dt. \label{eqa1}
\end{eqnarray}
For a Schwartz function $\what\psi$, its derivative can be estimated 
$
|(\what{\psi})'(t)|\le C_{\psi,\alpha}\min(1,|t|^{-\alpha-1}),
$
hence (\ref{est1}) yields
\begin{eqnarray}
\bigg\vert\int_\R {\mathcal R}(t)\cdot (\what{\psi})'(t/T)\,dt\bigg\vert & \le & \nonumber \\
& \le  & 2C_\eps C_{\psi,\alpha}\left(1+\int_1^Tt^{\alpha-\epsilon}\,dt
+\int_T^\infty t^{\alpha-\epsilon}(t/T)^{-\alpha-1}\,dt\right)
=O(T^{\alpha+1-\epsilon}), \label{errest}
\end{eqnarray}
as $T\to\infty$. In order to analyze the main term in (\ref{eqa1}), we make a change of variable $t=T\tau$, assuming $T=\theta_1^N$ for $N\ge 1$, to obtain
$$
-T^{-1}\int_\R \Phi^+_{pr(\vec{F}),x}(t)\cdot(\what{\psi})'(t/T)\,dt = - \int_\R \Phi^+_{pr(\vec{F}),x}(\theta_1^N \tau)\cdot(\what{\psi})'(\tau)\,d\tau.
$$
We use the Jordan decomposition 
of the matrix $\Sf^t$. Denote the Jordan blocks with eigenvalues equal by absolute value
to $|\theta|$ by $J_{k_1},\ldots,J_{k_l}$, with the corresponding invariant subspaces
$V_{k_1},\ldots,V_{k_l}$ and eigenvalues $\theta_{k_1},\ldots,\theta_{k_l}$
such that $|\theta_{k_i}|=|\theta|, \ 1\le i \le \ell$. (Note that there may be several Jordan blocks corresponding to the same eigenvalue, so $\theta_{k_i}$ need not be distinct.)
We then have
$$
pr(\vec{F}) = \sum_{i=1}^\ell pr_i(\vec{F}),\ \  \ \mbox{hence}\ \ \ \Phi^+_{pr(\vec{F}),x}(\theta_1^N \tau) = \sum_{i=1}^\ell \Phi^+_{pr_i(\vec{F}),x} (\theta_1^N \tau),
$$
where $pr_i$ is the projection of $\C^m$ onto the subspace $V_{k_i}$, parallel to the other invariant subspaces of $\Sf^t$.  Fix $i$ and denote by $\{w_j\}_{j=1}^{p_i}$ the basis
of $V_{k_i}$ consisting of eigenvector and root vectors, so that $\Sf^t w_{p_i} = \theta_{k_i} w_{p_i}$ and $\Sf^t w_j = \theta_{k_i} w_j +  w_{j+1}$ for $j=1,\ldots, p_i-1$. 
By Lemma~\ref{lem-cocycle}(iii), using the Jordan form structure, we obtain for $j=1,\ldots,p_i$,
\be \label{renorm1}
\Phi^+_{w_j,x}(\theta_1^N \tau) = \Phi^+_{(\Sf^t)^N w_j, Z^{-n}(x)}(\tau) = \sum_{k=j}^{p_i} {N\choose j-k} \theta_{k_i}^{N-k+j} \Phi^+_{w_k, Z^{-n}(x)}(\tau).
\ee
Let $\kappa_i=\kappa(pr_i (\vec{F}))$ be the corresponding maximal elimination exponent, and denote by $pr'_i(\vec{F})$ the projection of $\vec{F}$ onto the linear span of $w_{\kappa_i}$. In other words, for some
$a_{\kappa_i},\ldots,a_{p_i}$,
$$
pr'_i(\vec{F}) = a_{\kappa_i} w_{\kappa_i},\ \ \ \mbox{whereas}\ \ \ pr_i(\vec{F}) = \sum_{j=\kappa_i}^{p_i} a_j w_j,\ \ a_{\kappa_i}\ne 0.
$$
Expand $\Phi^+_{pr_i(\vec{F}),x} (\theta_1^N \tau)$ by linearity, and apply (\ref{renorm1}) to each term. 

\begin{lemma} \label{lem-vspom1}
We have
\begin{equation*}
\lim\limits_{N\to\infty}\int_{{\mathcal X}_\zeta}\int_{\mathbb R}\bigg(\frac{
\Phi_{pr_{i}(\vec{F}),x}^+(\theta_1^N\tau)}
{\binom{N}{N-\kappa_i+1}\theta_{k_i}^{N-\kappa_i+1}}-
\Phi_{pr'_{i}(\vec{F}),x}^+(\tau)\bigg) (\what{\psi})'(\tau)\,d\tau\,
d\wt{\mu}(x)
=0.
\end{equation*}
\end{lemma} 

\begin{proof}
It follows from (\ref{renorm1}) that ${\binom{N}{N-\kappa_i+1}\theta_{k_i}^{N-\kappa_i+1}}\Phi_{pr'_{i}(\vec{F}),x}^+(\tau)$ is the term with the fastest growing coefficient in the decomposition of $\Phi^+_{pr_i(\vec{F}),x} (\theta_1^N \tau)$. The other terms, when divided by this  coefficient, tend to zero by Lemma~\ref{lem-cocycle}(iv), and the claim follows by Dominated Convergence.
\end{proof}

Now we can finish the proof of the proposition. Let $c_i = \theta/\theta_{k_i}$ (note that $|c_i|=1$). Recall that $\kappa = \kappa(\vec{F})= \max_i \kappa_i(\vec{F})$. Since $\theta_{k_i} = \theta/c_i$, the last lemma, together with (\ref{eqa1}) and (\ref{errest}),  implies
\begin{eqnarray} 
& & \liminf\limits_{N\to\infty} \frac{\textstyle \theta_1^{-2N}\int\limits_{\Xxi}
\bigg\vert \int\limits_{\R} F\circ h_t(x)\cdot \what{\psi}(t/\theta_1^N)\,
dt\bigg\vert^2d\wt{\mu}(x)}{\textstyle \theta_1^{(N-\kappa+1)(2\alpha-2)}\binom{N}{N-\kappa+1}} =  \nonumber
\\[1.2ex] \label{lim1}
& & \liminf\limits_{N\to\infty} \int\limits_{\Xxi}\bigg\vert
\sum\limits_{i:\, \kappa_i=\kappa} c_i^{N-\kappa+1}
 \int\limits_{\R}\Phi^+_{pr'_i(\vec{F}),x}(\tau)(\what{\psi})'(\tau)d\tau\bigg\vert^2 d\wt{\mu}(x).
\end{eqnarray}
We next use a trivial estimate
$$
\Bigl|\sum_{i:\,\kappa_i=\kappa}  c_i^{N-\kappa+1} B_i \Bigr| \ge\Bigl| \sum_{i:\,\kappa_i=\kappa} B_i \Bigr| - \max_i|B_i|\cdot \sum_{i:\,\kappa_i=\kappa}|1-c_i^{N-\kappa+1}|,
$$
where $B_i = \int\limits_{\R}\Phi^+_{pr'_i(\vec{F}),x}(\tau)(\what{\psi})'(\tau)d\tau$, and note that $\max_i |B_i| \le C_\psi$ by Lemma~\ref{lem-cocycle}(iv).

\begin{lemma} \label{lem-vspom2} For any complex numbers $c_i$, with $|c_i|=1$, $i=1,\ldots,\ell$, and any $\delta>0$, there exists a sequence $n_j \uparrow \infty$ such that 
$\sup_j (n_{j+1}-n_j) < \infty$ and
$$
\sum_{i=1}^\ell |1-c_i^{n_j}| < \delta.
$$
\end{lemma}

The proof of the lemma is elementary and is left to the reader. Using $N_j-\kappa+1=n_j$ from the lemma, we can estimate
$$
\Bigl|\sum_{i:\,\kappa_i=\kappa}  c_i^{N_j-\kappa+1} B_i \Bigr|^2 \ge  \Bigl| \sum_{i:\,\kappa_i=\kappa} B_i \Bigr|^2 - 2 \Bigl| \sum_{i:\,\kappa_i=\kappa} B_i \Bigr| C_\psi \delta \ge
\Bigl| \sum_{i:\,\kappa_i=\kappa} B_i \Bigr|^2 - 2\ell C_\psi^2 \delta.
$$
Thus, we obtain, denoting $pr'(\vec{F}) = \sum\limits_{i:\,\kappa_i=\kappa} pr'_i(\vec{F})$,
\begin{multline*}
\liminf\limits_{j\to\infty} \frac{\theta_1^{-2N_j}\int\limits_{\Xxi}
\bigg\vert \int\limits_{\R} F\circ h_t(x)\cdot \what{\psi}(t/\theta_1^{N_j})\,
dt\bigg\vert^2d\wt{\mu}(x)}{\theta_1^{(N_j-\kappa+1)(2\alpha-2)}\binom{N_j}{N_j-\kappa+1}} \ge 
\\
\int_{\Xxi}
\bigg\vert\int_{\mathbb R}\Phi^+_{pr'(\vec{F}),x}(\tau)(\what{\psi})'(\tau)d\tau\bigg\vert^2 d\widetilde{\mu}(x) - 2\ell C_\psi^2 \delta.
\end{multline*}
 We can choose $\psi$ so that $\what\psi$ is positive and bounded away from zero in some neighborhood of $0$, and also so that the integral in the right-hand side of the last inequality is positive, since $pr'(\vec{F})\ne 0$,  in view of non-degeneracy of the cocycle $\Phi^+_{v,x}$, see  Lemma~\ref{lem-cocycle}(vi) and (v). Thus, choosing $\delta>0$ sufficiently small, we can ensure that the right-hand side is bounded away from zero. Recalling (\ref{eqa0}), we obtain
$$
\liminf\limits_{j\to\infty}\frac{\int\limits_\R|\psi(\theta_1^N\omega)|^2\,d\sigma_F(\omega)}
{(\theta_1^{N_j})^{2\alpha-2}\binom{N_j}{N_j-\kappa+1}} > 0.
$$
The upper bound is even easier, replacing $\liminf$ with $\limsup$ in (\ref{lim1}); in fact, we get an explicit upper bound
$$
\limsup\limits_{j\to\infty}\frac{\int\limits_\R|\psi(\theta_1^N\omega)|^2\,d\sigma_F(\omega)}
{(\theta_1^{N_j})^{2\alpha-2}\binom{N_j}{N_j-\kappa+1}} \le \ell^2 C_\psi^2.
$$
Together, these yield  (\ref{bounds}), in view of $\sup_j(N_{j+1}-N_j) < \infty$.
\end{proof}

\subsection{Constant length case and conclusion of the proof of the main theorem} Now suppose that the primitive aperiodic substitution $\zeta$ has constant length $q$. In this case $\Sf=\Sf_\zeta$ has all column sums equal to $q$, and
so the PF eigenvector of $\Sf^t$ is simply $s=\One$. It follows that the self-similar suspension flow $(\Xxi, h_t, \wt{\mu})$ is the suspension over $(X_\zeta, T_\zeta,\mu)$ with the
constant-one roof function. It turns out that there is a simple relation between the spectral measures. The following lemma is well-known to the experts: it follows, for instance, from considering induced representations, see \cite[Prop.\,1.1]{DL}. Thanks to Mariusz Lema{\'n}czyk who pointed this out to us. For the reader's convenience, we provide a direct elementary proof below, which was given by Nir Lev in a personal communication.

\begin{lemma}\label{lem-equivalence}
Let $(X,T,\mu)$ be an invertible probability-preserving system, and let $(\wt{X}, h_t, \wt{\mu})$ be the suspension flow with the constant-one roof function.
For  $f\in L^2(X,\mu)$ consider the
function $F\in L^2(\wt{X},\wt{\mu})$  defined by
$F(x,\tau)=f(x)$. Then
the following relation holds between the spectral measures $\sigma_F$  on $\R$ and $\sigma_f$ on $\T$:
\[
d\sig_F(\om)=\left(\frac{\sin(\pi \om)}{\pi \om}\right)^2 \cdot d\sig_f(e^{2\pi i \om}),\ \ \om\in \R.
\]
\end{lemma}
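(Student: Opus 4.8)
The plan is to compute both spectral measures directly from their defining Fourier--Stieltjes coefficients and match them through an explicit computation. Recall that $\sig_f$ on $\T = \R/\Z$ is determined by $\widehat{\sig}_f(-k) = \langle f\circ T^k, f\rangle$ for $k\in\Z$, while $\sig_F$ on $\R$ is determined by $\int_\R e^{2\pi i\om t}\,d\sig_F(\om) = \langle F\circ h_t, F\rangle$ for $t\in\R$. So the first step is to write $\langle F\circ h_t, F\rangle$ explicitly using the suspension structure with constant-one roof function, and to express it in terms of the base correlations $\langle f\circ T^k, f\rangle$.

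The key computation is this. For the constant-one roof suspension, a point $(x,\tau)$ with $\tau\in[0,1)$ flows under $h_t$ to $(T^{n}x, \tau')$ where $n = \lfloor \tau + t\rfloor$ and $\tau' = \tau + t - n$ (the fractional part). Since $F(x,\tau) = f(x)$ does not depend on the $\tau$-coordinate, I would write
\[
\langle F\circ h_t, F\rangle = \int_X \int_0^1 f(T^{\lfloor \tau+t\rfloor} x)\,\ov{f(x)}\,d\tau\,d\mu(x).
\]
First I would fix $t$ and split the $\tau$-integral over $[0,1)$ according to the value of $\lfloor \tau+t\rfloor$. Writing $t = n_0 + \{t\}$ with $n_0 = \lfloor t\rfloor$, the floor $\lfloor\tau+t\rfloor$ takes the value $n_0$ on a subinterval and $n_0+1$ on the complementary subinterval of $[0,1)$, with lengths $1-\{t\}$ and $\{t\}$ respectively. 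This gives
\[
\langle F\circ h_t, F\rangle = (1-\{t\})\,\langle f\circ T^{n_0}, f\rangle + \{t\}\,\langle f\circ T^{n_0+1}, f\rangle,
\]
i.e.\ a piecewise-linear interpolation of the integer-time correlations, which are exactly $\widehat{\sig}_f(-n_0)$ and $\widehat{\sig}_f(-n_0-1)$.

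The final step is to recognize this piecewise-linear interpolation as a convolution identity on the Fourier side. The function $t\mapsto \langle F\circ h_t, F\rangle$ is obtained from the sequence $k\mapsto \widehat{\sig}_f(-k)$ by convolving with the triangular (Fej\'er-type) tent function supported on $[-1,1]$, and the tent function is precisely the autocorrelation of the indicator $\One_{[0,1]}$. Taking Fourier transforms and using that the Fourier transform of $\One_{[0,1]}$ is $e^{-\pi i\om}\sin(\pi\om)/(\pi\om)$, whose squared modulus is $(\sin(\pi\om)/(\pi\om))^2$, I would conclude that $\sig_F$ is the image of $\sig_f$ under $\om\mapsto e^{2\pi i\om}$, weighted by the factor $(\sin(\pi\om)/(\pi\om))^2$. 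Concretely, I would verify that $\int_\R e^{2\pi i\om t}\,(\sin(\pi\om)/(\pi\om))^2\,d\sig_f(e^{2\pi i\om})$ reproduces the piecewise-linear expression above, by writing $(\sin(\pi\om)/(\pi\om))^2 e^{2\pi i\om t}$ as a superposition over integer shifts and using $\widehat{\sig}_f(-k) = \int_\T e^{2\pi i k\om}d\sig_f$. The main obstacle is purely bookkeeping: being careful with the floor/fractional-part splitting, with the sign and range conventions in the Fourier--Stieltjes transforms, and with checking that the tent-function convolution indeed matches the triangular kernel whose Fourier transform is the claimed sinc-squared factor. Once the integer-time correlations and the tent interpolation are correctly identified, the identity follows by uniqueness of a measure with prescribed Fourier--Stieltjes transform.
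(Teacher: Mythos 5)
Your proposal is correct and follows essentially the same route as the paper: the same splitting of the $\tau$-integral by the value of $\lfloor \tau+t\rfloor$ to get the piecewise-linear interpolation $(1-\{t\})\,\widehat{\sig}_f(-\lfloor t\rfloor)+\{t\}\,\widehat{\sig}_f(-\lfloor t\rfloor-1)$, followed by recognizing this as convolution of the integer correlations with the triangle function whose Fourier transform is $(\sin(\pi\om)/(\pi\om))^2$. The paper phrases the last step as a distributional inverse Fourier transform rather than via your autocorrelation-of-$\One_{[0,1]}$ remark and uniqueness appeal, but these are the same argument in substance.
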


\begin{proof} We have $\wt{X} = X\times [0,1]$ and $\wt{\mu} = \mu\times m$ (the Lebesgue measure on $[0,1]$).
The action $h_t$ on $\wt{X}$ is defined by
$h_t(F(x,\tau))=F(T^{\lfloor t+\tau\rfloor}(x),\{t+\tau\})$.
Computation of Fourier coefficients of $F$ yields
\begin{eqnarray*}
\what{\sigma}_F(-t)  =  \langle F\circ h_t, F\rangle & = &
 \int_0^1 \int_{X} h_t(F(x,\tau))\,\ov{F(x,\tau)}\,d\mu(x) \,dm(\tau) \\[1.2ex]   & = &
\int_0^1 \int_{X}f(T^{\lfloor t+\tau\rfloor}(x))\,\ov{f(x)}\, d\mu(x)\, dm(\tau)\\[1.2ex]
& =  & \int_0^{1-\{t\}}\int_{X}f(T^{\lfloor t+\tau\rfloor}(x))\,\ov{f(x)}\, d\mu(x)\, dm(\tau) \\[1.2ex]
 & + & \int_{1-\{t\}}^1\int_{X}f(T^{\lfloor t+\tau\rfloor}(x))\,\ov{f(x)}\, d\mu(x)\, dm(\tau)\\[1.2ex] 
& =  & (1-\{t\})\cdot\what{\sigma}_f(-\lfloor t\rfloor)+\{t\}\cdot\what{\sigma}_f(-\lfloor t+1\rfloor).
\end{eqnarray*}
Thus the Fourier transform of the measure $\sigma_F$ on $\Z$ is the same as the Fourier coefficients of $\sigma_f$, and  $\what\sigma_F$ on $\R$ is obtained by linear interpolation from two adjacent integers.
Consider the ``triangle'' function:
\[
\theta(t)=\begin{cases}0,& t <  -1\text{ or } t > 1,\\ t+1,&-1\le t <0,\\1-t ,& 0\le t \le 1. \end{cases}
\]
Then we have 
\[
\what{\sigma}_F(t)=\sum_{n\in{\mathbb Z}}\what{\sigma}_f(n)\,\theta(t-n).
\]
The inverse Fourier transform of the right-hand side (in the distributional sense) is
\[
\check{\theta}(\om)\cdot \sum_{n\in{\mathbb Z}}\hat{\sigma}_f(n)e^{2\pi i n \om}=\check{\theta}(\om)\,d\sigma_f(\om),
\]
where $\check{\theta}(\om)=(\sin(\pi \om)/(\pi \om))^2$, hence the result.
\end{proof}

\begin{proof}[Proof of Theorem~\ref{th-dimzero}] is  immediate from Proposition~\ref{prop-dimzero} and Lemma~\ref{lem-equivalence}.
\end{proof}

\begin{proof}[Proof of Theorem~\ref{th-main1}] follows from Corollary~\ref{cor-leb} and Theorem~\ref{th-dimzero}.
\end{proof}


\section{Examples and concluding remarks}\label{examples}

We start with a few general comments. Our result concerns constant length substitutions that are primitive and aperiodic.  Aperiodicity may be checked using Pansiot's Lemma \cite{Pansiot}: a primitive constant length substitution $\zeta$ on $\Ak$, which is 1-1 on letters, is aperiodic if and only if there exists $a\in \Ak$ which has at least two distinct ``neighborhoods'' in the set of allowed words, that is distinct words of the form
$bac$ which appear in the sequences $x\in X_\zeta$. The next step is to determine the ergodic classes of the bi-substitution, which is straightforward. Recall that  there is only one ergodic class $E_0= \{(a,a):\ a\in \Ak\}$ if and only if the spectrum of the substitution system is purely discrete. One way to guarantee that this is not the case is to avoid all ``coincidences'' altogether, in other words, make sure that for $a\ne b$ the substituted words $\zeta(a)$ and $\zeta(b)$ differ in every position. Such substitutions are called {\em bijective}. It is clear that a bijective substitution has no transitive pairs $(a,b)$ and at least two ergodic classes. The classical example of a bijective substitution is Thue-Morse: $0\to 01, \ 1\to 10$. A bijective substitution of constant length $q$ on an alphabet of size $m$ is determined by a sequence of $q$ permutations $\phi_1,\ldots,\phi_q$ of the set $\{1,\ldots,m\}$. Following \cite{Queff}, we say that it is {\em bijective abelian} if  these permutations generate a commutative subgroup of 
$S_m$. Clearly, all 2-letter bijective substitutions (also called ``generalized Thue-Morse) are abelian, but most aperiodic bijective substitutions on 3 and more letters are not. It is proved in \cite{Queff} that all bijective abelian substitutions have singular spectrum (actually, it is not explicitly stated there, see \cite[Theorem 3.19]{Bartlett} for a derivation). The 2-letter case was independently proven in \cite{Baake} by a different method.


\begin{example} \label{ex1}
Let $\Ak = \{1,2,3,4\}$ and $\zeta:\ \Ak\to \Ak$ is defined by
$$
\begin{array}{ccc} \zeta(1) & = & 113 \\ \zeta(2) &  = & 232 \\ \zeta(3) & = & 324 \\ \zeta(4) & = & 441 \end{array}
$$
\end{example}

\noindent It is easy to check that this is a primitive, aperiodic substitution of height one, which is bijective non-abelian. 
Its substitution matrix, with its eigenvalues and the corresponding eigenvectors are 
$$
\Sf_\zeta = \left[ \begin{array}{cccc} 2 & 0 & 0 & 1 \\ 0 & 2 & 1 & 0 \\ 1 & 1 & 1 & 0 \\ 0 & 0 & 1 & 2 \end{array}\right],\ \ \{3,2,1,1\},\ \ \mbox{eigenvectors:} \ \ 
\left[ \begin{array}{r} 1 \\ 1\\ 1 \\1 \end{array} \right],\ \left[ \begin{array}{r} 1 \\ -1\\ 0  \\ 0  \end{array} \right],\ \left[ \begin{array}{r} 1 \\ 1\\ -1 \\ -1 \end{array} \right],\ \left[ \begin{array}{r} 1 \\ -1\\ 1 \\ -1 \end{array} \right].
$$
Thus, according to Theorem~\ref{th-main1}, the spectrum of the measure-preserving system $(X_\zeta,T_\zeta,\mu)$ is singular. We continue with a more detailed analysis, in order to illustrate the concepts and the procedure described in Section~\ref{Queffelec}.

There are two ergodic classes for the bi-substitution $\zeta^{[2]}: \ E_0 =\{(a,a):\ a\in \Ak\}$ and $E_1=\{(a,b):\ a\ne b\}$. (By the way, note that having two ergodic classes for a bijective substitution is equivalent to the condition that the group generated by the permutations $\phi_j$ acts transitively on the set of pairs $\Ak\times \Ak$.)  Vectors $(v_{ab}) \in F\subset \C^{\Ak\times \Ak}$, the 2-dimensional
eigenspace of the transpose of the coincidence matrix ${\sf C}^t$, corresponding to the maximal eigenvalue $q=3$, are written in matrix form
$$
W = \left[ \begin{array}{cccc} w_0 & w_1 & w_1 & w_1 \\ w_1 & w_0 & w_1 & w_1 \\ w_1 & w_1 & w_0 & w_1 \\ w_1 & w_1 & w_1 & w_0 \end{array}\right].
$$
The set $F_+$ consists of those $v$ for which the above matrix $W$ is semi-positive. It is easy to see that it has eigenvalues $w_0 + 3w_1$ and $w_0-w_1$, the latter with multiplicity 3. Thus,
$$
\Qk = \{v\in F_+:\ v_{aa}=1,\ a\in \Ak\} \sim \{W:\ w_0= 1,\ -1/3 \le w_1 \le 1\},
$$ 
with extreme points $v^{(0)} \sim (w_0,w_1) = (1,1)$ and $v^{(1)}\sim (1,-1/3)$. According to Queff\'elec's Theorem \ref{th-queff}, the maximal spectral type of the substitution system is $(\lam_0 + \lam_1) * \om$, where
$\lam_i = v^{(i)} \Sig$. As usual, $\lam_0 = \delta_0$, so we focus on $\lam_1$, which is given by
$$
\lam_1 = \begin{bmatrix}
1&-1/3&-1/3&-1/3\\
-1/3&1&-1/3&-1/3\\
-1/3&-1/3&1&-1/3\\
-1/3&-1/3&-1/3&1
\end{bmatrix} \cdot (\sig_{ij})_{i,j=1}^4 = \begin{array}{r} \sigma_1+\sigma_2+\sigma_3+\sigma_4\\ -\sigma_{12}/3-\sigma_{21}/3-
\sigma_{13}/3-\sigma_{31}/3\\ -\sigma_{14}/3 -\sigma_{41}/3-\sigma_{23}/3-
\sigma_{32}/3\\ -\sigma_{24}/3 -\sigma_{42}/3-\sigma_{34}/3-\sigma_{43}/3 \end{array}
$$
Note that in the formula above the product is a linear combination of spectral measures with coefficients taken from the matrix, rather than matrix multiplication.
Diagonalizing the corresponding quadratic form, as in Proposition~\ref{decomposition}, we express $\lam_1$ as a positive linear combination of spectral measures of cylindrical functions orthogonal to the constants:
$$
\lam_1 = \frac{1}{3} \bigl[\sig_{\One_1 - \One_2} + \sig_{\One_3 - \One_4} + \sig_{\One_1 + \One_2 - \One_3 -\One_4}\bigr].
$$
We can apply Proposition~\ref{prop-dimzero} to obtain, noting that $\theta_1=3,\ \theta_2 = 2,\ j=2$, and $\kappa=1$:
$$
\lam_1(B_r(0)) \asymp r^{2-2\log_3 2}.
$$

The next example shows that the spectrum of substitution matrix eigenvalues can change
after transition to the pure base substitution (as we saw in Proposition~\ref{prop-reduce}, the difference can only be in roots of unity and zero eigenvalues).

\begin{example}\label{addeigen}
Let $\zeta:1\to 142, 2 \to 253, 3\to 251, 4 \to 514, 5 \to 415$. 
\end{example}
The substitution $\zeta$ is primitive and aperiodic, its height equals 2. The alphabet $\mathcal I$
of the pure base substitution consists of letters $a,\ b,\ c,\ d,\ e,\ f$ that correspond to the following sequences of length 2:
$a\leftrightarrow 14,\ b\leftrightarrow 24,\ c\leftrightarrow 34,\ d\leftrightarrow 25,\ e\leftrightarrow 35,
\ f\leftrightarrow 15$. The pure base substitution $\eta\colon
a \to ada, b\to dea, c \to dfa, d \to dcf, e \to daf, f \to abf$.
\[
S_\zeta=
\begin{bmatrix}
1&0&1&1&1\\
1&1&1&0&0\\
0&1&0&0&0\\
1&0&0&1&1\\
0&1&1&1&1
\end{bmatrix},
S_\eta=
\begin{bmatrix}
2&1&1&0&1&1\\
2&1&0&1&0&1\\
1&1&2&0&0&0\\
1&0&2&0&1&0\\
0&1&0&0&0&0\\
0&0&1&1&1&1\\
\end{bmatrix}.
\]
 The eigenvalues of $S_\zeta$ are $\{3,1,0,0,0\}$ and those of $S_\eta$ are $\{3, (1+i\sqrt{3})/2, (1-i\sqrt{3})/2,0,0,0\}$. 
The  substitution $\eta$ has a coincidence, hence the spectrum is pure discrete by Dekking's Theorem.

\begin{example} {\em (\cite[Example 9.3 and Section 11.1.2.3]{Queff})} \label{ex2} Let
$\zeta:\ 0\to 001,\ 1\to 122,\ 2\to 210$.
\end{example}

It is a bijective non-abelian substitution (also primitive, aperiodic, and of height one). The eigenvalues of the substitution matrix are $\{3,1,0\}$, so we obtain singularity of the spectrum already from Corollary~\ref{cor-half}. In \cite{Queff} it was erroneously stated that this system has a Lebesgue component in its spectrum; this was corrected by Bartlett \cite{Bartlett} with the help of his algorithm computing the Fourier coefficients of spectral measures.

\begin{example} {\em (Rudin-Shapiro)}
Let $\zeta:\ 0\to 01,\ 1\to 02,\ 2\to 31,\ 3\to 32$.
\end{example}

This is a non-bijective substitution; the bi-substitution has two ergodic classes and a transitive class. It has a Lebesgue spectral component \cite{Kaku}, see also \cite{Queff}. The eigenvalues of the substitution matrix are $\{2,\pm \sqrt{2},0\}$.

\medskip

On the other hand, consider the following, ``modified Rudin-Shapiro-like'' substitution:

\begin{example} {\em (\cite{ChGr1})}
Let $\zeta:\ 0\to 01,\ 1\to 20,\ 2\to 13,\ 3\to 32$.
\end{example}

It is bijective; the bi-substitution has three ergodic classes (the same as for Rudin-Shapiro, only the transitive class now becomes an ergodic class). The eigenvalues of the substitution matrix are again $\{2,\pm \sqrt{2},0\}$; however, here the spectrum is singular, as shown by Chan and Grimm \cite{ChGr1}, with the help of Bartlett's algorithm. Thus, our sufficient condition for singularity in Theorem~\ref{th-main1} is not a necessary condition.

\subsection{Open Questions}

\medskip
\renewcommand{\theenumi}{\roman{enumi}}
\begin{enumerate}
\item Is it true that if the constant length substitution is bijective, then the spectrum of the dynamical system is singular?

\medskip

The next two questions concern {\em non-constant length} substitutions.

\medskip

\item As far as we are aware, all the known examples of substitution systems with a Lebesgue spectral component are either constant length or their simple ``recodings'', which do not change the spectrum of the substitution matrix in an essential way. Is there a substitution with a Lebesgue spectral component, such that the PF eigenvalue $\theta_1$ of the substitution matrix is not an integer?

\medskip

\item Is there a result similar to Theorem~\ref{th-main1} for non-constant length substitutions, namely, that the absence of an eigenvalue of absolute value $\sqrt{\theta_1}$ for the substitution matrix implies singularity? Perhaps, there is at least an analog of Corollary~\ref{cor-half}, namely, that $|\theta_2| < \sqrt{\theta_1}$ implies singularity?
\end{enumerate}
\medskip



{\bf Acknowledgement:} We are grateful to Fabien Durand and Nir Lev for helpful discussions. Thanks also to Michael Baake, Franz G\"ahler, and Mariusz Lema{\'n}czyk for their comments on the first version of the paper.



\begin{thebibliography}{99}
\bibitem{Adam} B. Adamczewski, Symbolic discrepancy and self-similar dynamics, {\em Ann. Inst. Fourier (Grenoble)} {\bf 54} (2004) 2201--2234.

\bibitem{AGL} S. Aubry, C. Godr\`esche, and J.M. Luck, Scaling properties of a structure intermediate between quasi-periodic and random, {\em J.\ Stat.\ Phys.} {\bf 51} (1988), 1033--1074. 

\bibitem{Bartlett} A. Bartlett, Spectral theory of $\Z^d$ substitutions, {\em Ergod.\ Th.\ \& Dynam.\ Syst.} (to appear, {\em Preprint} arXiv:1410.8106).

\bibitem{Baake} M. Baake, F. G\"ahler, U. Grimm, Spectral and topological properties of a family of generalised Thue-Morse sequences, {\em J.\ Math.\ Phys.} {\bf 53} (2012), no.\ 3, 032701.

\bibitem{BG} M. Baake and U. Grimm, Squiral substitutions and beyond: substitution tilings with singular continuous spectrum, {\em Ergodic Theory \& Dynam. Sys.} {\bf 34} (2014), 1077--1102.

\bibitem{BaGr} M. Baake and U. Grimm, {\em Aperiodic order.} Vol.\ 1. A mathematical invitation. With a foreword by Roger Penrose. Encyclopedia of Mathematics and its Applications, 149. Cambridge University Press, Cambridge, 2013.

\bibitem{B13} A.~I.~Bufetov, {Limit theorems for suspension flows over Vershik automorphisms}, {\em Russian Math. Surveys}  {\textbf 68} (2013), No.\ 5, 789--860.

\bibitem{BS13} A.~I.~Bufetov and B.~Solomyak, {Limit theorems for self-similar
tilings}, {\em Comm. Math. Physics} {\bf 319} (2013), 761--789.

\bibitem{BuSo1} A.~I.~Bufetov and B. Solomyak,
On the modulus of continuity for spectral measures in substitution dynamics, {\em Adv. Math.} {\bf 260} (2014), 84--129.

\bibitem{ChGr1} L. Chan and U. Grimm, Spectrum of a Rudin-Shapiro-like sequence, {\em Adv. in
Appl. Math.} {\bf 87} (2017), 16--23.

\bibitem{ChGr2} L. Chan and U. Grimm, Substitution-based sequences with absolutely continuous diffraction, {\em J. Phys.: Conf. Ser.} {\bf 809} (2017), No. 1.

\bibitem{DL} A. I. Danilenko and M. Lema{\'n}czyk, Spectral multiplicities for ergodic flows, 
 {\em Discrete Contin.\ Dyn.\ Syst.} {\bf 33} (2013), no.\ 9, 4271--4289.
 
\bibitem{Dek1} F. M. Dekking, The spectrum of dynamical systems arising from substitutions of constant length. 
{\em Zeit.\ Wahr.\ Verw.\ Gebiete} {\bf 41} (1977/78), No. 3, 221--239. 

\bibitem{DK} F. M. Dekking and M. Keane, Mixing properties of substitutions, {\em Zeit.\ Wahr.\ Verw.\ Gebiete} {\bf 42} (1978),  23--33.

\bibitem{Du} F. Durand, A generalization of Cobham's theorem, {\em Theory Comput. Syst.} {\bf 31} (1998), No. 2, 169--185.

\bibitem{Emme} J. Emme, Spectral measure at zero for self-similar tilings, {\em Preprint} arXiv:1606.02470.

\bibitem{Frank} N. P. Frank, Substitution sequences in $\Z^d$ with a non-simple Lebesgue component in the spectrum, {\em Ergodic Theory Dynam.\ Systems} {\bf  23} (2003),  519--532.

\bibitem{Frank2} N. P. Frank, Multidimensional constant length substitution sequences, {\em Topology and its applications} {\bf 152} (2005), 44--69.

\bibitem{GL} C. Godr\`eche and J. M. Luck, Multifractal analysis in reciprocal space and the nature of the Fourier transform of self-similar structures, {\em J.\ Phys.\ A: Math.\ Gen.} {\bf 23} (1990), 3769--3797.


\bibitem{Kaku} S. Kakutani, Strictly ergodic symbolic dynamical systems, in {\em Proceedings of 6th Berkeley Symposium on Math. Statistics and Probability}, edited by L. M. LeCam, J. Neyman, and E. L. Scott (University of California Press, Berkeley, 1972). pp.\ 319--326.

\bibitem{Martin} J. C. Martin, Substitution minimal flows, {\em American J. Math.} {\bf 93} (1971), 503--526.

\bibitem{Pansiot} J.-J. Pansiot, Decidability of periodicity for infinite words, {\em RAIRO Inform.\ Th\'eor.\ Appl.} {\bf 20} (1986), 43--46.

\bibitem{Queff} M. Queff\'elec, {\em Substitution Dynamical Systems - Spectral Analysis}. Second edition. Lecture Notes in Math., 1294, Springer, Berlin, 2010.

\bibitem{Siegel} A. Siegel, Spectral theory and geometric representation of substitutions. {\em Substitutions in dynamics, arithmetics and combinatorics}, 199--252,
Lecture Notes in Math., 1794, Springer, Berlin, 2002. 

\bibitem{SolTil} B.~Solomyak, Dynamics of self-similar tilings, {\em Ergod.\ Th.\ \& Dynam.\ Syst.} \textbf{17} (1997), no. 3, 695 -- 738.

\bibitem{Str90} 
 R. S. Strichartz, Fourier asymptotics of fractal measures, {\em  J. Funct. Anal. } {\bf 89} (1990), no. 1, 154--187.
\end{thebibliography}
\end{document}